\def\RR{\mathds R}
\DeclareMathOperator{\Ima}{Im}
\DeclareMathOperator{\Gaug}{Gauge}
\DeclareMathOperator{\Lie}{Lie}
\DeclareMathOperator{\Rep}{Rep}
\DeclareMathOperator{\const}{const}
\def\pr{\textrm{pr}}
\newcommand{\G}{\mathcal{G}}
\def\({\left(}
\def\){\right)}
\def\[{\left[}
\def\]{\right]}
\def\al{\alpha}
\def\be{\beta}
\def\ga{\gamma}
\def\te{\theta}
\def\om{\omega}
\def\Om{\Omega}
\theoremstyle{plain}
\newtheorem{thm}{Proposition}[section]
\newtheorem{thmm}[thm]{Theorem}
\newtheorem{cor}[thm]{Corollary}
\newtheorem{lem}[thm]{Lemma}
\theoremstyle{definition}
\newenvironment{defn}
  {\pushQED{\qed}\defnx}
  {\popQED\enddefnx}
\newenvironment{obs}
  {\pushQED{\qed}\obsx}
  {\popQED\endobsx}
\newenvironment{exmp}
  {\pushQED{\qed}\examplex}
  {\popQED\endexamplex}
\def\arr{\rightarrow} 
\def\then{\Rightarrow} 
\def\h{\mathfrak{h}} 
\def\g{\mathfrak{g}} 
\def\l{\mathfrak{l}} 
\begin{document}

\title{Cartan geometries and multiplicative forms}

\author{Francesco Cattafi \footnote{Present affiliation: Erwin Schr\"odinger International Institute for Mathematics and Physics, Austria.
\newline Affiliation at submission: Department of Mathematics, KU Leuven, Belgium.
\newline Permanent e-mail address: \url{francesco.cattafi91@gmail.com} }
}

\date{}

\maketitle

\begin{abstract}
In this paper we show that Cartan geometries can be studied via transitive Lie groupoids endowed with a special kind of vector-valued multiplicative 1-forms. This viewpoint leads us to a more general notion, that of {\it Cartan bundle}, which encompasses both Cartan geometries and $G$-structures.
\end{abstract}


\begin{center}
\textbf{MSC2010}: 58A10, 53C05, 53C10, 53C15, 58H05
\end{center}

\maketitle

\section{Introduction}

The history of Cartan geometries is well known and dates back to the XIX century, when mathematicians began a sistematic study of non-euclidean geometries. In this perspective, the idea of Felix Klein was to shift the attention from the geometric objects to their symmetries: the slogan of his so-called Erlangen program was that each ``geometry" should be described by a specific group of transformations. 
Later, \'Elie Cartan took these geometries as standard models and used them to give rise to his \textit{espaces g\'en\'eralis\'es}. 




The research in this field then progressed on two paths. On the one hand, many authors used Cartan's ideas to obtain important results on relevant examples, such as parabolic geometries (see \cite{Cap09}). On the other hand, people used Cartan's approach to develop a general framework for studying these geometries; the standard modern reference is the famous book {\it Differential Geometry: Cartan's generalisation of Klein's Erlangen program} \cite{Sha97} by Richard Sharpe.

Our interest in these topics sparked from a different area in geometry. Recently, the concept of {\it Pfaffian groupoid} have been introduced \cite{Sal13} in order to understand the structure behind the jet groupoid of a Lie pseudogroup. Our original goal was to give an alternative description of the class of {\it transitive} Pfaffian groupoids, using the unique (up to isomorphism) principal bundle associated to any transitive groupoid. It has been quite an astonishing surprise to discover that, from the object we obtained, called a {\it Cartan bundle}, one could recover as a particular case the definition of a Cartan geometry. We believe that this new perspective can shed more lights in this field; for instance, one can wonder how to develop a theory of deformation for Cartan bundles, making use of our correspondence and of the known results for Lie groupoids. We are currently investigating further applications in \cite{Acc20}.

\paragraph{Main results}
Let us give a few more details on our main contributions. A Cartan geometry is defined as a principal $H$-bundle $P$ together with an equivariant vector-valued 1-form $\te$, called a Cartan connection, satisfying certain properties. We will define a {\it Cartan bundle} as a principal $H$-bundle $P$ together with an equivariant vector-valued 1-form $\te$ with conditions less restrictive than those of a Cartan geometry (see Definition \ref{definition_Cartan_bundle}). As anticipated above, we will prove a bijective correspondence between Cartan bundles and transitive Pfaffian groupoids (Theorem \ref{correspondence_cartan_bundles_pfaffian_groupoids}). In particular, when $\ker(\te) = 0$, one recovers the standard Cartan geometries, which correspond to a precise subclass of transitive Pfaffian groupoids: those whose symbol space is zero (see Theorem \ref{correspondence_pfaffian_groupoids_zero_symbol}).

Recall also that any (reductive) Cartan geometry on $M$ defines a $H$-structure, i.e.\ a reduction of the structure group of the frame bundle of $M$, together with a principal connection on it. We will show that, in the formalism of Cartan bundles $(P,\te)$, we can consider the particular case when $\ker(\te)$ coincides with the vertical bundle of $P$, and recover precisely the class of $H$-structures on $M$ without any choice of a connection. Moreover, these structures are in bijection with transitive Pfaffian groupoids whose symbol space is maximal.

\paragraph{Structure of the paper}
Section 2 is introductive, and reviews the basics on Cartan geometries and $G$-structures. Section 3 builds on the previous one and contains the main results, namely Theorem \ref{correspondence_pfaffian_groupoids_zero_symbol}, Definition \ref{definition_Cartan_bundle} and Theorem \ref{correspondence_cartan_bundles_pfaffian_groupoids}.

Through this paper we will use, without recalling the basics, the theory of Lie groupoids, Lie algebroids and principal bundles. For an introduction on these topics we refer to \cites{Mac87, Mac05, Moe03, Cra06}. Multiplicative forms on Lie groupoids are also a standard notion (see \cite{Kos16} for a nice overview) but somehow less known, especially in the case when the coefficients are not trivial. Since they constitute our main tool, we included an appendix with the definitions and the statements we use in the rest of the paper. Some of those are not simply technical lemmas but are original results, which appeared in greater generality in the author's PhD thesis \cite{Cat20}. 

\paragraph{Acknowledgements}
The author would like to thank Luca Accornero and Marius Crainic for useful discussions and comments on the first draft of this paper, as well as the anonymous referees for the improvements they suggested. The author was partially supported by the NWO under VICI project 639.033.312 (The Netherlands) and by the FWO under EOS project G0H4518N (Belgium), and is a member of the GNSAGA (INdAM).

\section{Cartan geometries}\label{section_cartan_geometries}

Let us recall the basic definitions and properties of Cartan geometries.

\begin{defn}\label{def_Klein_geometry}
A {\bf Klein geometry} is a pair $(G,H)$, where $G$ is a Lie group and $H \subseteq G$ a Lie subgroup such that the quotient manifold $G/H$ is connected. A {\bf Klein pair} is a pair $(\g,\h)$ of a Lie algebra $\g$ and a Lie subalgebra $\h \subseteq \g$. 

A {\bf model geometry} consists of a Klein pair $(\g,\h)$ together with the choice of an integration $H$ of $\h$ and of a representation $H \arr GL (\mathfrak{g})$ which extends the adjoint representation $Ad: H \arr GL (\mathfrak{h})$.
\end{defn}


\begin{defn}[Definition 3.1 of {\cite[chapter 5]{Sha97}}]\label{def_Cartan_geometry}
\index{Cartan geometry}
Let $(\mathfrak{g}, \mathfrak{h})$ be a model geometry. A {\bf Cartan geometry} $(P,\te)$ modelled on $(\mathfrak{g}, \mathfrak{h})$ is a principal $H$-bundle $P \arr M$ together with a form $\te \in \Om^1 (P, \mathfrak{g})$, called a {\bf Cartan connection} on $P$, such that
\begin{itemize}
\item $\te$ is a pointwise isomorphism, i.e.\ $\te_p: T_p P \arr \g$ is a linear isomorphism for every $p \in P$
\item $\te$ is $H$-equivariant, i.e.\ $(R_h)^* \te = h^{-1} \cdot \te$ for every $h \in H$
\item $\te( v^\dagger ) = v$ for every $v \in \h$, where $v^\dagger \in \mathfrak{X}(P)$ denotes the fundamental vector field corresponding to $v$ w.r.t.\ the $H$-action on $P$. \qedhere
\end{itemize}
\end{defn}

It follows by dimension counting that $\dim(M) = \dim(\g) - \dim(\h)$. 

\begin{exmp}\label{examples_cartan_geometries}
Any Klein geometry is trivially a Cartan geometry modelled on itself. It is enough to consider the principal $H$-bundle $G \arr G/H$; then the Maurer-Cartan form $\te \in \Om^1 (G,\g)$ satisfies the requirements. More general examples of Cartan geometries include Riemannian structures, affine structures, projective structures or conformal structures (see e.g.\ chapter 6-7-8 of \cite{Sha97} and chapter 4 of \cite{Cap09}).
%
\end{exmp}

One could notice that, in order for Definition \ref{def_Cartan_geometry} to make sense, the entire model geometry is not strictly necessary: one only needs $\g$ to contain $\h$ as a vector subspace and to be a $H$-representation (a priori unrelated with the adjoint representation on $\h$).
Indeed, the fact that the $H$-representation on $\g$ extends the adjoint representation on $\h$ follows automatically from the properties of the Cartan connection $\theta$. To see this, fix an arbitrary $p \in P$ and denote by $a_p: \h \arr T_p P$ the infinitesimal $H$-action on $P$, so that $v^\dagger_p = a_p (v)$ for every $v \in \h$. Then,
$$ h^{-1} \cdot v = h^{-1} \cdot \te_p (a_p (v)) = \te_{p \cdot h} (d_{p \cdot h} R_h a_p (v) ) =
\te_{p \cdot h} (a_{ph} (Ad_h (v) ) ) = Ad_h (v),$$
where in the third passage we used the well-known identity
$$d_{p \cdot h} R_h a_p (v) = a_{ph} (Ad_h (v) ).$$

On the other hand, the fact that $\g$ is itself a Lie algebra is not used at all in the definition of Cartan geometry: one of the reasons to ask it is to be able to define a notion of curvature via a Maurer-Cartan-like equation (see Remark \ref{curvature_Cartan_geometries}).

\begin{obs}[Tangent bundle of a Cartan geometry]\label{tangent_bundle_Cartan_geometry}
Let $(P,\te)$ be a Cartan geometry over $M$ modelled on $(\g,\h)$; then the tangent bundle of $M$ is isomorphic to the vector bundle associated to $P$ and the representation $\g/\h \in \Rep(H)$:
\begin{equation*}
TM \cong P[\g/\h] := (P \times \g/\h ) /H.
\end{equation*}
This is a well known result (see e.g.\ Theorem 3.15 of \cite[Chapter 5]{Sha97}), which will be relevant for us in the later sections. It follows from the fact that the tangent space $T_x M$ at any point $x = [p] \in M$ can be identified with the vector space $\g/\h$. Note that such an identification depends on the choice of a representative of $x$; for each $p \in P$ there is a canonical linear isomorphism $\phi_p: T_x M \arr \g/\h$, induced by the Cartan connection $\te_p: T_p P \arr \g$.
\end{obs}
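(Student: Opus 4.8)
The plan is to build the isomorphism pointwise out of the Cartan connection and then check that it assembles into a global vector bundle map. First I would fix $p \in P$ over $x = \pi(p) \in M$ and analyse $\te_p \colon T_pP \arr \g$. The third axiom, $\te(v^\dagger) = v$ for $v \in \h$, shows that $\te_p$ maps the vertical space $V_pP := \ker(d\pi_p)$ injectively into $\h$; since $\dim V_pP = \dim\h$, this restriction is a linear isomorphism $V_pP \to \h$. As $\te_p$ is an isomorphism and $d\pi_p \colon T_pP \arr T_xM$ is surjective with kernel $V_pP$, the Cartan connection descends to a well-defined linear isomorphism $\phi_p \colon T_xM \arr \g/\h$ determined by $\phi_p(d\pi_p(v)) = \te_p(v) + \h$, which is precisely the canonical isomorphism induced by $\te_p$ mentioned in the statement.

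Next I would record how $\phi_p$ transforms under the $H$-action. The representation $\rho \colon H \arr GL(\g)$ extends $\mathrm{Ad}$, hence preserves $\h$; thus $\h$ is an $H$-submodule and $\g/\h$ carries the quotient representation $\bar\rho$, which is exactly the $H$-module used to form $P[\g/\h]$. Combining the equivariance axiom $(R_h)^*\te = \rho(h^{-1})\circ\te$ with $\pi\circ R_h = \pi$, a short computation gives $\phi_{p\cdot h} = \bar\rho(h^{-1})\circ\phi_p$ for all $h \in H$. Equivalently, since $[p,\bar\xi] = [p\cdot h,\, \bar\rho(h^{-1})\bar\xi]$ in $P[\g/\h]$, one obtains $[p\cdot h,\, \phi_{p\cdot h}(\xi)] = [p,\, \phi_p(\xi)]$ for every $\xi \in T_xM$.

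Then I would define $\Phi \colon TM \arr P[\g/\h]$ by $\Phi(\xi) := [p,\, \phi_p(\xi)]$ for an arbitrary $p$ in the fibre over the base point of $\xi$; the previous step shows this is independent of the choice of $p$, so $\Phi$ is well defined and covers the identity of $M$. Fibrewise linearity is immediate since each $\phi_p$ is linear, and smoothness follows by composing with a local section $U \arr P$ of the principal bundle, which trivialises both $TM|_U$ and $P[\g/\h]|_U$ simultaneously. As $\Phi$ restricts on each fibre to the isomorphism $\phi_p$, it is a vector bundle isomorphism, which proves the claim.

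The one point that requires care is the transformation law $\phi_{p\cdot h} = \bar\rho(h^{-1})\circ\phi_p$: one must keep track of conventions so that the cocycle defining $P[\g/\h]$ — with the $H$-action coming precisely from the chosen extension of $\mathrm{Ad}$ to $\g$ — is matched by the equivariance of the Cartan connection. Everything else is routine bookkeeping with the defining properties of $\te$.
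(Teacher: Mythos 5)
Your proof is correct and follows exactly the route the paper indicates: the Cartan connection $\te_p$ sends the vertical space isomorphically onto $\h$ (third axiom), hence descends to the pointwise isomorphism $\phi_p\colon T_xM \arr \g/\h$, and the equivariance axiom yields $\phi_{p\cdot h} = \bar\rho(h^{-1})\circ\phi_p$, so the assignments $\xi \mapsto [p,\phi_p(\xi)]$ glue into the isomorphism $TM \cong P[\g/\h]$. This is the standard argument (Sharpe, Chapter 5, Theorem 3.15) that the remark refers to, with the details filled in appropriately.
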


Let us recall now a special class of Cartan geometries, to which one often restricts in order to prove more powerful results; in our story, it will be relevant when looking at $G$-structures. It is also worth mentioning that all the most common non-trivial Cartan geometries (e.g.\ those mentioned in Example \ref{examples_cartan_geometries}) belong to this class.

\begin{defn}\label{def_reductive}
 A Klein geometry $(G,H)$ is called {\bf reductive} if there exists a $H$-module $\l$ which is a complement of $\h = \Lie(H)$ in $\g = \Lie(G)$. Equivalently, one asks that the vector space splitting $\g = \h \oplus \g/\h$ is also a splitting of $H$-modules ($\l$ is isomorphic to $\g/\h$ as vector spaces). In particular, considering the adjoint representation on $\h$, $\g$ can be seen as a $H$-module as well.

Similarly, a Klein pair $(\g,\h)$ is called {\bf reductive} if there exists a $\h$-module $\l$ which is a complement of $\h$ in $\g$, i.e.\ $\g = \h \oplus \l$. In particular, considering the adjoint representation on $\h$, $\g$ can be seen as an $\h$-module as well.

A Cartan geometry is called {\bf reductive} if its model geometry is.
\end{defn}


In the reductive case, any Cartan connection $\theta \in \Om^1 (P,\g = \h \oplus \l)$ splits as the sum of two $H$-equivariant forms $\theta_\h \in \Om^1 (P,\h)$ and $\theta_\l \in \Om^1 (P,\l)$. It is easy to check that $\theta_\h (v^\dagger) = v$ for every $v \in \h$, so that $\theta_\h$ defines a connection on the principal $H$-bundle $P \to M$. On the other hand, $\theta_\l (v^\dagger)=0$, so $\ker(d\pi) \subseteq \ker(\theta_\l)$; the following section gives an interpretation of this phenomenon.

%

\subsection{Reductive Cartan geometries and connections on $G$-structures}

We review now the precise relation between Cartan geometries and another well known framework to study geometric structures: $G$-structures. The goal is to motivate the generalisation of Cartan geometries to Cartan bundles, introduced in the next section.

Let $G \subseteq GL(n,\RR)$ be a Lie subgroup; we recall that a {\bf $G$-structure} on an $n$-dimensional manifold $M$ is a reduction of the structure group of the principal $GL(n,\RR)$-bundle of frames $Fr(M) \arr M$ (see e.g.\ \cites{Cra16b, Kob95, Ste64} for more details).

\begin{defn}\label{def_tautological_form}
Let $\pi: P \arr M$ be a $G$-structure; its {\bf tautological form} $\te_{taut} \in \Om^1 (P,\RR^n)$ is defined as
$$ (\te_{taut})_p (v) := p^{-1} (d \pi (v) ),$$
where we interpret the frame $p \in P$ as a linear isomorphism $p: \RR^n \arr T_{\pi(p)} M$.
\end{defn}

The form $\te_{taut}$ has many properties: among the most important ones, it is $G$-equivariant, pointwise surjective, and satisfies $\ker(\te_{taut}) = \ker (d \pi)$.
 Moreover, it is the key ingredient to prove the following fundamental statement, which appeared first as Theorem 2 in \cite{Kob56}, and is discussed also in Appendix A.2 of \cite{Sha97} and Section 1.3 of \cite{Cap09}. Note that, from now on, we will use the letter $H$ for the Lie subgroup of $GL(n,\RR)$, to avoid misunderstandings with the notations introduced earlier in Definition \ref{def_Klein_geometry}.

\begin{thm}\label{correspondence_cartan_geometries_g_structures_with_connections}
Let $H \subseteq GL(n,\RR)$ be a Lie subgroup and $M$ an $n$-dimensional manifold. Then there is a bijective correspondence
$$ \left\{   \begin{array}{c}
    \text{(isomorphism classes of)}  \\
    \text{Cartan geometries over $M$} \\
    \text{modelled on $(H \ltimes \RR^n,H)$}
    \end{array} \right\} 
\tilde{\longleftrightarrow}
\left\{   \begin{array}{c}
    \text{(isomorphism classes of)}  \\
    \text{$H$-structures over $M$} \\
    \text{together with a principal connection}
    \end{array} \right\}. $$
\end{thm}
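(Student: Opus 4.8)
The plan is to realize both maps of the correspondence explicitly and verify they are mutually inverse. Throughout I use the reductive splitting $\g = \h \oplus \RR^n$ of the model $(H \rtimes \RR^n, H)$, under which $H$ acts on $\g$ by the adjoint action on $\h$ and by the standard linear representation on $\RR^n$; correspondingly every $\g$-valued $1$-form $\te$ on a principal $H$-bundle decomposes as $\te = \te_\h \oplus \te_{\RR^n}$ with $\te_\h \in \Om^1(P,\h)$ and $\te_{\RR^n} \in \Om^1(P,\RR^n)$.

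\emph{From an $H$-structure to a Cartan geometry.} Given an $H$-structure $\pi\colon P \to M$ together with a principal connection $\om \in \Om^1(P,\h)$, I set $\te := \om \oplus \te_{taut}$, with $\te_{taut}$ the tautological form of Definition \ref{def_tautological_form}, and check the three axioms of Definition \ref{def_Cartan_geometry}. The $H$-equivariance of $\te$ follows by combining the equivariance of $\om$ (which transforms by $\mathrm{Ad}$) with that of $\te_{taut}$ (which transforms by the standard representation), matching precisely the $H$-module structure on $\g$. The fundamental-vector-field axiom holds because $\om(v^\dagger) = v$ while $\te_{taut}(v^\dagger) = 0$ for $v \in \h$, the latter since $v^\dagger$ is vertical and $\ker \te_{taut} = \ker d\pi$. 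Finally $\te_p$ is a linear isomorphism: if $\te_p(v) = 0$ then $v \in \ker\om_p$, a complement of the vertical space $V_p$ on which $\te_{taut,p}$ is injective (its kernel being $V_p$), so $v = 0$, and a dimension count ($\dim T_pP = \dim\h + n = \dim\g$, consistent with $\dim M = n$) gives surjectivity.

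\emph{From a Cartan geometry to an $H$-structure.} Given $(P,\te)$ modelled on $(H \rtimes \RR^n, H)$, note first that $\te_{\RR^n}$ kills the vertical bundle, since $\te(v^\dagger) = v \in \h$; hence for each $p \in P$ over $x \in M$ it descends through $d\pi_p$ to a linear map $T_xM \to \RR^n$, which is an isomorphism because $\te_p$ is one and the vertical directions are already exhausted by $\h$. Its inverse is a frame $\phi(p)\colon \RR^n \to T_xM$, and this defines a map $\phi\colon P \to Fr(M)$, which I check is smooth, covers $\mathrm{id}_M$, and is equivariant for the $H$-action on $Fr(M)$ through $H \subseteq GL(n,\RR)$; being fibrewise injective, $\phi$ exhibits $P$ as a reduction of $Fr(M)$, i.e.\ an $H$-structure. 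Unwinding Definition \ref{def_tautological_form} shows $\phi^*\te_{taut} = \te_{\RR^n}$, so under this identification $\te_{\RR^n}$ is the tautological form. It remains to note that $\te_\h$ satisfies $\te_\h(v^\dagger) = v$ and, by the equivariance axiom applied to the $\h$-component, has $H$-invariant kernel, which is complementary to the vertical bundle because $\te_p$ is an isomorphism; so $\te_\h$ is a principal connection on this $H$-structure.

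Putting the $\h$- and $\RR^n$-components back together shows the two constructions are mutually inverse, and one checks directly that an isomorphism of Cartan geometries over $M$ restricts, componentwise, to an isomorphism of the associated $H$-structures carrying connection to connection, and conversely, so the correspondence descends to isomorphism classes. I expect the one step requiring genuine care to be the passage from the \emph{abstract} principal bundle $P$ of a Cartan geometry to an honest $H$-\emph{structure}: one must extract the reduction $\phi\colon P \hookrightarrow Fr(M)$ from the soldering part $\te_{\RR^n}$ and see that it is a principal subbundle, with everything independent of choices up to isomorphism (so that $P$ is recovered as an $H$-structure only up to isomorphism, as the statement allows). Once $\phi$ is available, the rest is the routine bookkeeping of equivariances, kernels and dimensions indicated above.
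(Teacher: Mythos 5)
Your proposal is correct and follows essentially the same route as the paper: decompose the Cartan connection as $\te = \te_\h \oplus \te_{\RR^n}$, read off $\te_\h$ as a principal connection and use the soldering part $\te_{\RR^n}$ (via the induced isomorphisms $T_xM \cong \g/\h \cong \RR^n$) to embed $P$ into $Fr(M)$ as an $H$-structure with $\te_{\RR^n}$ the tautological form, and conversely set $\te = \ga + \te_{taut}$. Your write-up is in fact somewhat more detailed than the paper's sketch (which defers to Kobayashi), notably in checking the three Cartan axioms and the equivariance of the reduction map $\phi$, and those checks are accurate.
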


\begin{proof}
The correspondence is given as follows. Any Cartan geometry $(P,\te)$ as above is automatically reductive. Indeed, the Lie algebra of $G = H \ltimes \RR^n$ splits as $\g = \h \oplus \l$, with $\l = \RR^n$ and the standard $H$-action on $\l$ given by matrix multiplication. Accordingly, we can decompose the Cartan connection $\te \in \Om^1 (P, \g)$ into $\te_\h \in \Om^1 (P,\h)$ and $\te_{\l} \in \Om^1 (P, \l)$. As observed above, $\te_\h$ is a principal connection on $P$, while $\te_{\l}$ can be interpreted as the tautological form of a $H$-structure as follows.



Fixing a basis $(e_1, ..., e_n)$ of $\g/\h$, for any $p \in P$ we can consider the linear isomorphism $\phi_p: T_x M \arr \g/\h$ from Remark \ref{tangent_bundle_Cartan_geometry}, so that $(\phi_p^{-1} (e_1), ..., \phi_p^{-1} (e_n) )$ is a basis of $T_{[p]} M$. Denoting by $Q \subseteq Fr(M)$ the set of all frames of the form $(\phi_p^{-1} (e_1), ..., \phi_p^{-1} (e_n) )$, for any $p \in P$, one checks easily that $Q$ is a $H$-structure and $P \arr Q$ an isomorphism of principal bundles.
Then $P$ can be seen as a $H$-structure, and $\te_\l$ as its tautological form, identifying the vector space $\l$ with $\g/\h$.

Conversely, given a $H$-structure $P \subseteq Fr(M)$ and a connection $\ga \in \Om^1 (P,\h)$, we define a Cartan connection on $P$ as the sum $\te = \ga + \te_{taut} \in \Om^1 (P,\g)$, where $\te_{taut} \in \Om^1 (P,\RR^n)$ is the tautological form of $P$.
\end{proof}

\begin{obs}[Curvature and torsion]\label{curvature_Cartan_geometries}
In the setting of Proposition \ref{correspondence_cartan_geometries_g_structures_with_connections}, one gets further correspondences between other relevant objects. To any Cartan geometry $(P,\te)$ one associates its {\bf curvature} via the classical Maurer-Cartan formula
$$ \Om := d\te + \frac{1}{2} [\te, \te] \in \Om^2 (P, \g),$$
and its {\bf torsion} by taking the image of $\Om$ via the projection $\pr: \g \to \g/\h$
$$\tau := \pr (\Om) \in \Om^2 (P, \g/\h).$$
When $\theta$ is reductive, $\tau$ has a more explicit expression, and one can easily write down the precise relations between $\tau$ and the torsion of the connection $\te_\h$ from Proposition \ref{correspondence_cartan_geometries_g_structures_with_connections}; similarly for $\Omega$ and the curvature of $\theta_\h$ (see e.g.\ Theorem 3 of \cite{Kob56}). Moreover,
\begin{itemize}
\item if the homogeneous space $G/H$ is symmetric, i.e.\ $\l \cong \g/\h$ is a Lie algebra satisfying $[ \l, \l] \subseteq \h$, then the torsion of $(P,\te)$ coincides with the torsion of the connection $\te_\h$,
\item if, furthermore, $ [\l, \l ] = 0$ (i.e.\ $\l$ is an abelian Lie algebra), then also the curvature of $(P,\te)$ coincides with the curvature of the connection $\te_\h$.
\end{itemize}
A Cartan geometry is called {\bf flat} or {\bf torsion-free} if, respectively, its curvature or its torsion vanishes. For instance, if $H = GL(n,\RR)$, one has the affine space ${\mathds A}^n = H \ltimes \RR^n$; a Cartan geometry modelled on $({\mathds A}^n, H)$ is an affine geometry. Since $\RR^n = {\mathds A}^n/H$ is an abelian Lie algebra, such a Cartan geometry is flat and torsion-free precisely when the corresponding connection on $Fr(M)$ is flat and torsion-free, recovering the standard notion of {\it affine structure} on a manifold.
\end{obs}



The correspondence from Proposition \ref{correspondence_cartan_geometries_g_structures_with_connections} gives therefore a compact framework to investigate $H$-structures with connections, a topic extensively studied in the literature, via Cartan geometries. In order to treat even more examples, namely $H$-structures without the choice of a connection, from a similar point of view, we will introduce in the next section the more general framework of Cartan bundles.

\

We conclude this section by recalling that one has also the following slightly more conceptual notion, obtained by extracting the key properties from the tautological form of a $H$-structure:

\begin{defn}\label{definition_abstract_structure}
An {\bf abstract $H$-structure} $(P,\te)$ over $M$ consists of a principal $H$-bundle $\pi: P\arr M$ and a $H$-equivariant 1-form $\te \in \Om^1 (P, \RR^n)$ which is pointwise surjective and satisfies $\ker (\te) = \ker(d\pi)$. 
\end{defn}

Of course, ``concrete'' $H$-structures with their tautological forms $\theta_{taut}$ (Definition \ref{def_tautological_form}) are abstract $H$-structures. Conversely, given any abstract $H$-structure $(P,\te)$, one can produce an injective immersion of principal bundles $j: P \arr Fr(M)$ such that $ \te = j^* (\te_{taut})$ (this is well known; see \cite[Theorem 2.3.2]{Cat20} for an explicit proof). This means that there is basically no practical difference between the standard definition of $H$-structure and Definition \ref{definition_abstract_structure}; by adopting the latter, one avoids to keep track of unnecessary objects (the frame bundle).

\section{Cartan geometries and Lie groupoids}\label{section_Pfaffian}

In order to investigate Cartan geometries from the point of view of Lie groupoids, let us recall the following object.

\begin{defn}
Given a principal $G$-bundle $P \arr M$, its {\bf gauge groupoid} $\Gaug(P)$ is the quotient of the pair groupoid $P \times P \rightrightarrows P$ by the diagonal action of $G$:
\begin{equation*}
(P \times P)/G \rightrightarrows P/G \cong M. \qedhere
\end{equation*}
\end{defn}

The groupoid structure of $\Gaug(P)$ is given as follows: an arrow $[p,q] \in \Gaug(P)$ has source $[q]$ and target $[p]$, the multiplication is $[p,q] [q,r] = [p, r] $, the unit $1_{[p]} = [p,p]$, and the inverse $[p,q]^{-1} = [q,p]$. As a consequence, the isotropy groups of $\Gaug(P)$ are all isomorphic to $G$. Moreover, $\Gaug(P)$ is trivially transitive: for any two points $[q], [p] \in M$ there exists an arrow $[p,q] \in \Gaug(P)$ sending one to the other. Actually, it is well known that gauge groupoids exhaust all transitive Lie groupoids:
\begin{thm}\label{correspondence_transitive_groupoids_principal_bundles}
Let $\mathcal{G} \rightrightarrows M$ be a transitive Lie groupoid and fix a point $x\in M$; then the $s$-fibre $P = s^{-1} (x)$ defines a principal bundle $t: P \arr M$ with structure group the isotropy group $G= \G_x$, and the map
$$ \Gaug(P)\arr \G, \quad [g, h]\mapsto g\cdot h^{-1}$$
is an isomorphism of Lie groupoids. This induces a bijective correspondence:
$$ \left\{   \begin{array}{c}
    \text{(isomorphism classes of)}  \\
    \text{transitive Lie groupoids over $M$}
    \end{array} \right\} 
\tilde{\longleftrightarrow}
\left\{   \begin{array}{c}
    \text{(isomorphism classes of)}  \\
    \text{principal bundles over $M$}
    \end{array} \right\}. $$
\end{thm}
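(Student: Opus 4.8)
\emph{Proof plan.} The argument splits into three parts: (i) showing that $P:=s^{-1}(x)$ is a principal bundle over $M$; (ii) showing that the map $[g,h]\mapsto gh^{-1}$ is a well-defined isomorphism of Lie groupoids $\Gaug(P)\to\G$; and (iii) deducing that the assignments ``transitive groupoid $\mapsto$ associated principal bundle'' and ``principal bundle $\mapsto$ gauge groupoid'' are mutually inverse on isomorphism classes. For (i): since $s$ is a submersion, $P$ is a closed embedded submanifold of $\G$, and transitivity is exactly the statement that the anchor $(s,t)\colon\G\to M\times M$ is a surjective submersion, from which one checks that $t|_P\colon P\to M$ is a surjective submersion. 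The isotropy group $G=\G_x$ acts on $P$ on the right by groupoid multiplication (if $s(p)=x=t(g)$ then $pg\in s^{-1}(x)$); this action is free by right-cancellation, and its orbits are precisely the fibres of $t|_P$, because for $p,p'\in P$ with $t(p)=t(p')$ the arrow $p^{-1}p'$ lies in $\G_x$ and sends $p$ to $p'$. Hence $P/G\cong M$, and local triviality follows by pushing forward local sections of the submersion $t|_P$; so $t\colon P\to M$ is a principal $G$-bundle.

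For (ii), consider $\Phi\colon\Gaug(P)\to\G$, $[g,h]\mapsto gh^{-1}$, which is defined because $s(g)=s(h)=x$. It is well defined since $(ga)(ha)^{-1}=gh^{-1}$ for $a\in G$, and a direct computation shows it is a morphism of Lie groupoids over $\mathrm{id}_M$: it sends the source $[h]$ of $[g,h]$ (which corresponds to $t(h)\in M$) to $s(gh^{-1})=t(h)$, similarly for targets, and $\Phi([g,h][h,k])=gk^{-1}=\Phi([g,h])\Phi([h,k])$. Smoothness of $\Phi$ is automatic, as it is induced on the quotient submersion $P\times P\to\Gaug(P)$ by the smooth map $(g,h)\mapsto gh^{-1}$. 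It is injective: if $gh^{-1}=g'h'^{-1}$ then $t(g)=t(g')$, so $a:=g^{-1}g'\in G$, and then $g'=ga$ while cancelling $g$ gives $h'=ha$, whence $[g,h]=[g',h']$. It is surjective: given $\gamma\in\G$, transitivity provides $h\in P$ with $t(h)=s(\gamma)$, and then $g:=\gamma h\in P$ satisfies $\Phi([g,h])=\gamma$.

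It remains, within (ii), to upgrade the bijective morphism $\Phi$ to an isomorphism of Lie groupoids; this is the step requiring the most care, since in general a bijective morphism of Lie groupoids need not have a smooth inverse. The key observation is that $\Phi$ restricts, on each source fibre $\bar s^{-1}([h])$ of $\Gaug(P)$ (identified with $P$ via $[p,h]\leftrightarrow p$), to the map $p\mapsto ph^{-1}$, i.e.\ to right translation by the arrow $h^{-1}$ in $\G$; right translations are diffeomorphisms between source fibres of $\G$, so $\Phi|_{\bar s^{-1}([h])}$ is a diffeomorphism onto $s^{-1}(t(h))$. Now if $v\in\ker d\Phi$, then, since $s\circ\Phi=\bar s$, also $d\bar s(v)=ds(d\Phi(v))=0$, so $v$ is tangent to a source fibre of $\Gaug(P)$; but $d\Phi$ is injective on such tangent spaces, so $v=0$. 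Thus $\Phi$ is an immersion; a dimension count ($\dim\G=2\dim M+\dim G=\dim\Gaug(P)$) makes it a local diffeomorphism, hence --- being bijective --- a diffeomorphism, and therefore an isomorphism of Lie groupoids.

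Finally, for (iii): the choice of $x$ is immaterial, since for any arrow $a\colon x\to x'$ the right translation $R_a\colon s^{-1}(x')\to s^{-1}(x)$ is an isomorphism of principal bundles over $M$, intertwining the actions through the group isomorphism $g\mapsto a^{-1}ga$. Part (ii) shows that, starting from a transitive Lie groupoid $\G$, forming $P=s^{-1}(x)$ and then $\Gaug(P)$ recovers $\G$. Conversely, starting from a principal $G$-bundle $P\to M$, the gauge groupoid $\Gaug(P)=(P\times P)/G$ is a transitive Lie groupoid (the diagonal $G$-action on $P\times P$ is free and proper, so the quotient is a smooth manifold with submersive source and target), and choosing a base point $[q_0]$ and identifying $\bar s^{-1}([q_0])$ with $P$ as above recovers $P$ with its original principal $G$-bundle structure. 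Since both constructions visibly send isomorphic objects to isomorphic objects, they descend to mutually inverse bijections between the two sets of isomorphism classes.
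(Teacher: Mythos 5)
The paper itself offers no proof of this proposition --- it is invoked as a well-known fact (it is the standard ``transitive Lie groupoids are gauge groupoids'' theorem, found in the cited references of Mackenzie and Moerdijk--Mr\v{c}un) --- so there is no internal argument to compare yours against. Your proof is the standard textbook argument, and parts (ii) and (iii) are correct and complete at a reasonable level of detail: the well-definedness, bijectivity and smoothness of $\Phi$, the immersion-plus-dimension-count upgrade to a diffeomorphism (one could alternatively write the smooth inverse directly as $\gamma\mapsto[\gamma\,\sigma(s(\gamma)),\sigma(s(\gamma))]$ using a local section $\sigma$ of $t|_P$), the independence of the base point via $R_a$ and $g\mapsto a^{-1}ga$, and the recovery of $P$ as a source fibre of $\Gaug(P)$ are all fine.

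The one place you should be careful is the opening of part (i): you assert that ``transitivity is exactly the statement that the anchor $(s,t)$ is a surjective submersion.'' In this paper (and in the references it cites) transitivity means only that the anchor is \emph{surjective} --- see the paper's own remark that $\Gaug(P)$ is ``trivially transitive'' because arrows exist between any two points. That surjectivity of the anchor forces it (equivalently, $t|_{P}\colon s^{-1}(x)\to M$) to be a submersion is a genuine theorem, and it is precisely the analytic core of the statement you are proving: without it you have no local sections of $t|_P$ and hence no local triviality. The standard way to fill this in is to show $t|_{s^{-1}(x)}$ has constant rank (right translations by elements of $\G_x$ identify nearby fibres, and left translation by local bisections through arrows $y\to y'$ identifies the behaviour over different points of $M$, using transitivity), and then to observe that a constant-rank map of rank $<\dim M$ has meager image, contradicting surjectivity by Baire/Sard; hence the rank is full and $t|_P$ is a surjective submersion. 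With that inserted (or with an explicit appeal to the theorem that transitive Lie groupoids are locally trivial, e.g.\ in Mackenzie), the rest of your argument goes through as written.
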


Our theorem below restricts the correspondence of Proposition \ref{correspondence_transitive_groupoids_principal_bundles} by considering on the right-hand side principal bundles with a structure of Cartan geometry on them.

\begin{thmm}\label{correspondence_pfaffian_groupoids_zero_symbol}
Let $(P,\te)$ be a Cartan geometry over $M$ modelled on $(\g,\h)$, with $\te \in \Om^1 (P, \g)$. Consider the gauge groupoid $\G = \Gaug(P)$ associated to $P$, and the representation $E = P[\g] \in \Rep(\G)$ induced from $\g \in \Rep(H)$. Then $\G$ is endowed with a form $\om \in \Om^1 (\G, t^*E)$ such that
\begin{enumerate}
\item $\om$ is multiplicative (Definition \ref{generalisation_multiplicative})
\item $\om$ is pointwise surjective
\item $\ker(ds) \cap \ker(\om) = \ker (dt) \cap \ker(\om) = 0$.
\end{enumerate}
 Conversely, any transitive Lie groupoid $\G$, endowed with a representation $E$ and a form $\om \in \Om^1 (\G, t^*E)$ satisfying the properties above, arises from a Cartan geometry (up to the choice of a model geometry, in the sense explained below).
\end{thmm}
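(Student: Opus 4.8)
The guiding idea is that $\om$ is the gauge-groupoid incarnation of the difference of two copies of $\te$. For the forward direction, write $\G=\Gaug(P)=(P\times P)/H$ as in Proposition \ref{correspondence_transitive_groupoids_principal_bundles} and let $\pr_1,\pr_2\colon P\times P\arr P$ be the projections. I would first observe that $\pr_1^*\te-\pr_2^*\te\in\Om^1(P\times P,\g)$ is equivariant for the diagonal $H$-action (since $\te$ is $H$-equivariant) and annihilates its fundamental vector fields (since $\te(v^\dagger)=v$), so that it descends to a form $\om$ on $\G$ valued in the associated bundle $(P\times P)\times_H\g$, which one identifies canonically with $t^*E$ for $E=P[\g]$; explicitly $\om_{[p,q]}[X_p,X_q]=[p,\te_p(X_p)-\te_q(X_q)]$, where $[X_p,X_q]$ is the class of $(X_p,X_q)\in T_pP\times T_qP$. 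Pointwise surjectivity and the conditions $\ker(ds)\cap\ker(\om)=\ker(dt)\cap\ker(\om)=0$ are then immediate from the fact that each $\te_p$ is a linear isomorphism, once one identifies $\ker(ds)_{[p,q]}\cong T_pP$ and $\ker(dt)_{[p,q]}\cong T_qP$. Multiplicativity in the sense of Definition \ref{generalisation_multiplicative} is the only non-formal point: lifting the multiplication $\G\times_M\G\arr\G$ to $P^3\arr P^2$, $(p,q,r)\mapsto(p,r)$, with factors $(p,q,r)\mapsto(p,q)$ and $(p,q,r)\mapsto(q,r)$, it reduces to the telescoping identity $\te_p-\te_r=(\te_p-\te_q)+(\te_q-\te_r)$, the representation twist in the definition being exactly $[p,q]\cdot[q,v]=[p,v]$ in $E$.

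For the converse, fix $x\in M$; by Proposition \ref{correspondence_transitive_groupoids_principal_bundles} the $s$-fibre $P=s^{-1}(x)$ is a principal $H$-bundle with $H=\G_x$ and $\G\cong\Gaug(P)$, and since $E$ is a representation of the transitive groupoid $\G$ we have $E\cong P[\g]$ with $\g:=E_x$ an $H$-module. Each $p\in P$ is an arrow $x\arr t(p)$, so I would define $\te\in\Om^1(P,\g)$ by $\te_p(X):=p^{-1}\cdot\om_p(X)$ for $X\in T_pP=\ker(ds)_p$, the arrow $p^{-1}$ acting through $E$, and then check the three axioms of Definition \ref{def_Cartan_geometry}. $H$-equivariance comes from multiplicativity applied to the products $p\cdot h$, and $\te(v^\dagger)=v$ — for the embedding $\h\hookrightarrow\g$, $v\mapsto\om_{1_x}(v^\dagger_{1_x})$ — from multiplicativity applied to $p\cdot\exp(tv)$. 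For the pointwise-isomorphism axiom, multiplicativity forces $u^*\om=0$ along the unit section, so $\om_{1_x}$ vanishes on the $\dim M$-dimensional complement $du(T_xM)$ of $\ker(ds)_{1_x}$ in $T_{1_x}\G$; combined with pointwise surjectivity and $\ker(ds)\cap\ker(\om)=0$ this forces $\om_{1_x}$ to restrict to an isomorphism $\ker(ds)_{1_x}\xrightarrow{\sim}\g$, whence $\dim\g=\dim P$ and each $\te_p$, being injective, is an isomorphism.

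It remains to equip $(\g,\h)$ with the structure of a reductive model geometry. Here I would choose a principal connection on $P$ — these always exist — obtaining an $H$-invariant complement of $\h=\Lie(H)=\ker(\rho_x)$ in the Lie algebroid fibre $A_x$; transporting it along $\om_{1_x}\colon A_x\xrightarrow{\sim}\g$ gives an $H$-invariant complement $\l$ of the image of $\h$ in $\g$. On $\g=\h\oplus\l$ I would put the semidirect-product bracket, with $\h$ acting on the abelian ideal $\l$ by the differential of the $H$-action; this makes $(\g,\h)$ a reductive Klein pair (Definition \ref{def_Klein_geometry}) and $H\arr GL(\g)$ an extension of $\mathrm{Ad}\colon H\arr GL(\h)$ by automorphisms, i.e.\ a reductive model geometry. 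Since $\te$ was built precisely so that $\pr_1^*\te-\pr_2^*\te$ descends back to $\om$, applying the forward construction to $(P,\te)$ returns $(\G,E,\om)$ up to isomorphism, which proves the converse.

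The forward direction and the verification of the Cartan-connection axioms are essentially bookkeeping; the two points that require care are (i) matching the difference-form construction with the precise definition of multiplicativity with non-trivial coefficients, and (ii) in the converse, recovering the reductive structure — noting that, although the Lie bracket of the model is not detected by $\om$ (which is why the statement reads ``arises from'' rather than as a bijective correspondence), a reductive model always exists because principal connections do — together with checking that the two assignments are mutually inverse.
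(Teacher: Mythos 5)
Most of what you write is correct and runs parallel to the paper's argument: descending $\mathrm{pr}_1^*\te-\mathrm{pr}_2^*\te$ from $P\times P$ to $\Gaug(P)$ is exactly the specialisation of Proposition \ref{transportation_multiplicative_via_Morita} to the group $H$ with the zero form, your telescoping identity is its multiplicativity check, and the identifications $\ker(ds)_{[p,q]}\cong T_pP$, $\ker(dt)_{[p,q]}\cong T_qP$ replace the appeal to Proposition \ref{isomorphisms_symbol_space_Morita_equivalence}; in the converse, the definition $\te_p=p^{-1}\cdot\om_p$, its $H$-equivariance, the identity $\te(v^\dagger)=v$, and the pointwise-isomorphism count based on $u^*\om=0$ are the paper's own steps (via Lemmas \ref{multiplicative_form_is_equivariant} and \ref{multiplicativity_implies_s_transversality}) in slightly different clothing. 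Your observation that the Lie bracket of the model is not seen by $\om$, so that one may decree a semidirect-product bracket once an $H$-invariant complement $\l$ is found, is legitimate; you should still record, as the paper does, that the isotropy action on $\g=E_x$ restricts to $\mathrm{Ad}$ on the embedded $\h$ (it follows from the two axioms you checked, and it is what makes $(\g,\h,H)$ a model geometry rather than just a filtered $H$-module).

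The genuine gap is the reductivity step. A principal connection on $P$ does not give an $H$-invariant complement of $\h$ in $A_x$: equivariance of a connection only relates horizontal spaces at different points of a fibre, $\mathcal{H}_{ph}=dR_h(\mathcal{H}_p)$, and places no constraint on $\mathcal{H}_{1_x}$ by itself; since $H$ does not fix the point $1_x\in P=s^{-1}(x)$, the action for which invariance is needed is not induced by the bundle structure at all, but is the isotropy representation on $E_x$ transported through $\om_{1_x}$, and the horizontal space of a connection at $1_x$ can be prescribed to be an arbitrary complement for that action. An $H$-submodule complement of $\h$ inside $\g$ is precisely what reductivity means, and it is a genuine restriction (for parabolic models such as the projective one no invariant complement exists), so it cannot follow from the soft fact that connections always exist; note also that your converse never invokes the hypothesis $\ker(\om)\cap\ker(dt)=0$, which is a warning sign that structure specific to $\om$ has been discarded. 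The paper extracts the complementary module from $\om$ itself: the hypotheses give a representation of $\G$ on $TM$, namely $g\cdot v=d_gt(\al)$ for the unique $\al\in\ker(\om_g)$ with $d_gs(\al)=v$, whose restriction to the isotropy makes $T_xM\cong\g/\h$ into an $H$-module, and reductivity is deduced from that (this is the one genuinely delicate point of the whole converse). Any repair of your argument has to produce the invariant complement from this kind of data attached to $\om$, not from an auxiliary connection on $P$.
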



Let us be more precise on the second part of the statement of Theorem \ref{correspondence_pfaffian_groupoids_zero_symbol}: in the proof below, starting with $(\G,E,\om)$, we will consider the pair $(\g:= E_x, \h:= \om_{1_x} (T_{1_x} H))$, where $H$ is the isotropy group of $\G$ at some $x \in M$. Technically speaking this is not a model geometry, but the only missing property is the fact that $\g$ is a Lie algebra. As explained in Section \ref{section_cartan_geometries}, this is not needed at all to define a Cartan geometry, and become relevant only e.g.\ if one looks at its curvature and torsion (which is not the case here). The last part of our statement reads therefore as follows: any $(\G, E, \om)$ satisfying the properties above arises from a Cartan geometry, up to the choice of a Lie bracket on a fibre of $E$.

We also warn the reader that, in the following proof, we will use several results from the Appendix.

\begin{proof}
We apply Proposition \ref{transportation_multiplicative_via_Morita} to the principal $H$-bundle $\pi: P \arr M$. Here we consider the Cartan connection $\te \in \Om^1 (P,\g)$ on $P$ and the Maurer-Cartan form $\omega_{MC} \in \Om^1 (H, \h)$ on $H$; since $\theta$ is $H$-equivariant and $\theta_p ( a_p (v) ) = v$ for every $v \in \h$, the $H$-action on $P$ is multiplicative (Corollary \ref{condition_multiplicativity_principal_bundle}). Accordingly, the following differential form $\om \in \Om^1 (\G, t^*E)$ is well defined and multiplicative:
$$ \om_{[p,q]} ([v,w]) := \te_p (v) - [p,q] \cdot \te_q (w).$$

Clearly, $\om$ is pointwise surjective since $\te$ is so. In order to check the third condition on $\om$, we use the second part of Proposition \ref{transportation_multiplicative_via_Morita} and the fact that $\te$ is pointwise injective:
$$ \pi^* (\ker(ds) \cap \ker(\om)_{\mid M} ) \cong \ker (\te) = 0.$$
The distribution $\ker(ds) \cap \ker(\om)$ is then zero at any unit $1_x \in \G$. Using Lemma \ref{multiplicative_form_is_equivariant}, we see that the right translation $R_g$, for any $g \in s^{-1}(x)$, induces an isomorphism $d_{1_x} R_g$ which sends $\ker(d_{1_x}s) \cap \ker(\om_{1_x})$ to $\ker(d_g s) \cap \ker (\om_g)$; hence $\ker(ds) \cap \ker(\om) = 0$ at any point of $\G$.

Last, observe that the inverse map $i$ of any Lie groupoid induces an isomorphism $di: \ker(ds) \arr \ker(dt)$. Since $\om$ is multiplicative, the formula $(i^* \om)_g = - g^{-1} \cdot \om_g$ holds for every $g \in \G$; it follows that $di$ sends $\ker(ds) \cap \ker(\om)$ isomorphically to $\ker(dt) \cap \ker(\om)$. We conclude that $\ker(dt) \cap \ker(\om) = 0$ as well.

\

Conversely, given a transitive Lie groupoid $\G$ and a 1-form $\om$ taking values in some $E \in \Rep(\G)$, fix any $x \in M$, and consider the isotropy group $H:= \G_x$ and the principal $H$-bundle $P:= s^{-1}(x)$ associated to $\G$ by Proposition \ref{correspondence_transitive_groupoids_principal_bundles}, with projection $\pi:= t_{\mid P}: P \arr M$. Under our hypotheses on $\om$, we are going to prove that $P$ carries the following structure of Cartan geometry modelled on $(\g:= E_x, \h:= \om_{1_x} (T_{1_x} H) )$:
$$\te \in \Om^1 (P,\g), \quad \quad \te_g (v) := g^{-1} \cdot \om_g (v).$$

First of all, we prove that $(\g,\h)$ is indeed a model geometry. As first step, we show that $\om$ induces a vector bundle isomorphism between the representation $E$ and the Lie algebroid $A := \Lie(\G)$. We already know that, for every $g \in \G$, the linear map
 $$\om_g: T_g \G \arr E_{t(g)}$$
 is surjective. Since $\om$ is also multiplicative, we can apply Lemma \ref{multiplicativity_implies_s_transversality}, and since $\ker(ds) \cap \ker(\om) = 0$, we obtain the decomposition
 $$ T_g \G = \ker(d_g s) \oplus \ker(\om_g). $$
 It follows that $\om_g$ remains surjective even when restricted to $\ker(d_g s)$, and that the kernel of ${\om_g}_{\mid \ker(d_g s) }$ is zero. In particular, for every $x \in M$, consider $g = 1_x$: then
$$ \om_{1_x}: A_x = \ker(d_{1_x}s) \arr E_x$$
 is a linear isomorphism. Because of this, we can not only identify $T_{1_x} H \subseteq A_x$ (the isotropy algebra of $A$ at $x$) with the subspace $\h:= \omega_{1_x} (T_{1_x} H) \subseteq \g$, but also transport the Lie algebra structure and the adjoint $H$-representation from $T_{1_x} H$ to $\h$.

As remarked after the statement of this theorem, we will ignore the Lie algebra structure of $\g$. Accordingly, to conclude the proof that $(\g,\h)$ is a model geometry, one needs only to show that the $H$-representation on $\g$ (induced by the $\G$-representation on $E$) is actually an extension of the adjoint $H$-representation on $\h$. But, as discussed in Section \ref{section_cartan_geometries}, this will be automatic once we show that $(P,\theta)$ is a Cartan geometry.

Let us prove now that $\theta$ is a Cartan connection. From Lemma \ref{multiplicative_form_is_equivariant} (based on the multiplicativity of $\om$), it follows that $\te$ is $H$-equivariant:
$$ ((R_h)^* \te)_g (v) = \te_{gh} (d R_h (v)) = (gh)^{-1} \cdot \om_{gh} (dR_h (v)) = $$
$$ = h^{-1} g^{-1} \cdot ((R_h)^*\om)_g (v) = h^{-1} \cdot g^{-1} \cdot \om_g (v) = h^{-1} \cdot \te_g (v).$$
Moreover, $\ker(\te)$ can be computed as
$$ \ker (\te) = TP \cap \ker(\om) = \ker(ds) \cap \ker(\om)_{\mid P} \cong \g(\om) = 0.$$
Here the last isomorphism is given by the right translations: using Lemma \ref{multiplicative_form_is_equivariant}, for every $p \in P$, with $\pi(p)=y$, the isomorphism $d_p R_{p^{-1}}$ sends $\ker(d_p s) \cap \ker(\om_p)$ to $\ker(d_{1_y} s) \cap \ker(\om_{1_y})$. This proves that $\te$ is pointwise injective.

Since $\om$ is pointwise surjective, for every non-zero element $\xi \in E_{s(g)}$, there exists some $v \in T_g \G$ such that $\om_g (v) = g \cdot \xi \in E_{t(g)}$. Actually, since $T_g \G = \ker(d_g s) \oplus \ker(\om_g)$ and $\om_g(v) \neq 0$, there exists some $v' \in \ker(d_g s) = T_g P$ such that $\om_g(v)=\om_g(v')$; therefore, $$\te_g (v') = g^{-1} \cdot \om_g (v') = g^{-1} \cdot \om_g (v) = \xi.$$
We conclude that $\te$ is pointwise surjective as well, hence it is a pointwise isomorphism.


Last, we prove the third property of Definition \ref{def_Cartan_geometry}, namely that $\theta(v^\dagger) = v$ for every $v \in \h$. Note that $v = \om_{1_x}(\alpha)$ for some $\alpha \in T_{1_x} H$, and therefore (paying attention to the difference between $\h$ and $T_{1_x} H$) $v^\dagger_g = a_g (\alpha) = d_{1_x} m(g,\cdot) (\alpha )$. Accordingly, for any $g \in P \subseteq \G$,
$$ \te_g (v^\dagger_g) = g^{-1} \cdot \om_g ( d_{1_x} m(g,\cdot) (\alpha ) ) = g^{-1} \cdot (L_g^*\om)_{1_x} (\alpha) = g^{-1} \cdot g \cdot \om_{1_x} (\alpha) = \om_{1_x} (\alpha) = v,$$
where we used the fact that the $H$-action on $P$ is the restriction of the multiplication $m$ of $\G$, and we applied Lemma \ref{multiplicative_form_is_equivariant} to $\omega$.
 \end{proof}


The pair $(\G,\om)$ that we have just described in Theorem \ref{correspondence_pfaffian_groupoids_zero_symbol} is an instance of the following object:

\begin{defn}\label{def_Pfaffian_groupoid}
A {\bf Pfaffian groupoid} $(\mathcal{G}, \om)$ over $M$ consists of a Lie groupoid $\mathcal{G} \rightrightarrows M$ together with a representation $E \arr M$ of $\mathcal{G}$ and a differential form $\om \in \Om^1 (\mathcal{G}, t^* E)$ such that
\begin{enumerate}
\item $\om$ is multiplicative (Definition \ref{generalisation_multiplicative})
\item $\om$ is of constant rank 
\item The subbundle
$$\mathfrak{g}(\om): = (\ker (ds) \cap \ker (\om))_{\mid M} \subseteq \Lie(\mathcal{G})$$
is a Lie subalgebroid of $\Lie(\G)$.
\end{enumerate}
We call $\g(\om)$ the {\bf symbol space} of $(\mathcal{G}, \om)$. Moreover, a Pfaffian groupoid $(\G,\om)$ is called
\begin{itemize}
\item {\bf full} if the form $\om$ is pointwise surjective.
\item {\bf Lie-Pfaffian}, or of Lie type, if it satisfies the additional condition
\begin{equation*}
\ker (dt) \cap \ker(\om) = \ker (ds) \cap \ker(\om). \qedhere 
\end{equation*}
\end{itemize}
\end{defn}
Notice that, with the same arguments involving right-translations used in Theorem \ref{correspondence_pfaffian_groupoids_zero_symbol}, one sees that $\g(\om) \subseteq \Lie(\G)$ is a subalgebroid if and only if $\ker(ds) \cap \ker(\om) \subseteq T\G$ an involutive distribution.

Of course, if $\g(\om) = 0$, the symbol space is automatically a Lie subalgebroid. Then Theorem \ref{correspondence_pfaffian_groupoids_zero_symbol} can be rephrased as follows:
$$ \left\{   \begin{array}{c}
    \text{(isomorphism classes of)}  \\
    \text{transitive full Lie-Pfaffian groupoids over $M$} \\
    \text{with zero symbol}
    \end{array} \right\} 
\tilde{\longleftrightarrow}
\left\{   \begin{array}{c}
    \text{(isomorphism classes of)}  \\
    \text{Cartan geometries over $M$} \\
    \text{(up to the model geometry)}
    \end{array} \right\}. $$


The notion of Pfaffian groupoid was first introduced in \cite{Sal13} in order to understand the structure behind the jet groupoids of Lie pseudogroups; see also \cite{Yud16} for a revisitation of Cartan's original works on pseudogroups in this framework. Equivalently, a Pfaffian groupoid can be interpreted as the Lie theoretic version of a {\it Pfaffian fibration}, a notion encoding the essential properties of PDEs on jet bundles together with their Cartan forms (see \cite{Cat19}).

\begin{obs}[relations with previous works on Cartan geometries]\label{obs_relation_other_works}
Our approach on Cartan geometries fits in some recent reformulations using the language of Lie groupoids and algebroids. Blaom introduced in \cite{Bla06} the notion of {\it Cartan algebroid}, i.e.\ a Lie algebroid $A$ together with a compatible connection. If $A$ is the Atiyah algebroid $TP/H$ associated to a principal bundle $P$, then it describes the infinitesimal counterpart of a Cartan connection $\te$ on $P$.
 Since $A$ is the Lie algebroid of the gauge groupod $\Gaug(P)$, the way to recover his result from our formalism is via the correspondence between multiplicative 1-forms on Lie groupoids and Spencer operators on Lie algebroids described in \cite{Cra12}.
 

Blaom introduced in \cite{Bla16} also the global counterpart of a transitive Cartan algebroid in terms of distributions on the gauge groupoid of $P$ which are compatible with the groupoid multiplication. In particular, our Theorem \ref{correspondence_pfaffian_groupoids_zero_symbol} resembles Blaom's \cite[theorem 1.1]{Bla16}; given our result, one can prove Blaom's by considering the distribution $\ker(\te)$.
We believe that our proof is more enlightening since it follows entirely from the general properties of multiplicative forms on Lie groupoids.

Last, we also mention the recent book \cite{Sau16} by Crampin and Saunders. They proposed a revised approach to Cartan geometries, introducing a notion of {\it infinitesimal Cartan connection} on a Lie algebroid, which generalises further Blaom's Cartan algebroids. However, little focus is given on the global counterpart of those objects.
\end{obs}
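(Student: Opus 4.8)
The plan is to read Blaom's distribution off as the kernel of the multiplicative form produced by Theorem \ref{correspondence_pfaffian_groupoids_zero_symbol}, and then to show that the assignment $\om \leftrightarrow \ker(\om)$ translates our classification of pairs $(\G,\om)$ into Blaom's classification of distributions on the gauge groupoid. Throughout, $\G = \Gaug(P)$ is the gauge groupoid of the principal $H$-bundle underlying the Cartan geometry $(P,\te)$, and the distribution on $\G$ determined by $\te$ is precisely $D := \ker(\om) \subseteq T\G$, where $\om$ is the form of Theorem \ref{correspondence_pfaffian_groupoids_zero_symbol}.

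First I would treat the forward direction. Given a reductive Cartan geometry $(P,\te)$, the theorem furnishes $\om \in \Om^1(\G, t^*E)$ that is multiplicative, pointwise surjective, and satisfies $\ker(ds)\cap\ker(\om) = \ker(dt)\cap\ker(\om) = 0$. Since $\om$ is pointwise surjective onto $E$ of constant rank $\dim(\g)$, the subbundle $D = \ker(\om)$ is regular of rank $\dim(\G)-\dim(\g) = \dim(M)$. Lemma \ref{multiplicativity_implies_s_transversality}, together with $\ker(ds)\cap\ker(\om)=0$, yields the splitting $T\G = \ker(ds)\oplus D$, and symmetrically $T\G = \ker(dt)\oplus D$; thus $D$ is complementary to both the source and the target fibres. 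Finally, multiplicativity of $\om$ forces $D$ to be compatible with the groupoid multiplication: unwinding $m^*\om = \pr_1^*\om + g\cdot\pr_2^*\om$ shows that $dm$ maps $D\times_{\G}D$ into $D$, so the kernel of a multiplicative form is a multiplicative subbundle. These three features --- regularity of the correct rank, double transversality, and multiplicativity --- are exactly the defining conditions of the distribution in \cite[theorem 1.1]{Bla16}.

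Next I would establish the converse, so as to match Blaom's classification rather than merely produce examples. Starting from a multiplicative distribution $D\subseteq T\G$ complementary to $\ker(ds)$, I recover the form as follows: complementarity gives, for each $g$, the projection $T_g\G \arr \ker(d_g s)$ along $D_g$, and post-composing with $dR_{g^{-1}}\colon \ker(d_g s)\arr A_{t(g)}$ produces $\om \in \Om^1(\G, t^*A)$ with $\ker(\om)=D$, where $A=\Lie(\G)$ is taken as the coefficient representation $E$ with its induced $\G$-action. Multiplicativity of $D$ translates into multiplicativity of $\om$ (equivalently, the equivariance of Lemma \ref{multiplicative_form_is_equivariant}), pointwise surjectivity is automatic from the splitting, and the transversality hypotheses on $D$ give $\ker(ds)\cap\ker(\om)=\ker(dt)\cap\ker(\om)=0$. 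Theorem \ref{correspondence_pfaffian_groupoids_zero_symbol} then returns a reductive Cartan geometry whose infinitesimal datum is the transitive Cartan algebroid integrated by $D$, which is the content of Blaom's reconstruction.

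The main obstacle I anticipate is the reconstruction of $\om$ from $D$, and in particular the proof that $\om\leftrightarrow\ker(\om)$ is genuinely bijective. A priori $\om$ carries more data than $D$, since it records the identification of $T\G/\ker(\om)$ with the representation $E$; the resolution is that the zero-symbol condition $\ker(ds)\cap\ker(\om)=0$ makes $\om_{1_x}\colon A_x\arr E_x$ an isomorphism at the units, canonically identifying $E$ with $A=\Lie(\G)$ and pinning down the normalisation of $\om$. Thus $D$ together with the intrinsic algebroid $A$ determines $\om$ uniquely, and the two classifications agree. The remaining work is bookkeeping: one unwinds Blaom's notion of a distribution \emph{compatible with the multiplication} and checks that it coincides term by term with Definition \ref{generalisation_multiplicative}, after which \cite[theorem 1.1]{Bla16} becomes a direct corollary of Theorem \ref{correspondence_pfaffian_groupoids_zero_symbol}.
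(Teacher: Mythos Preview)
The paper does not supply a proof here: the statement is a \emph{Remark}, and the only argument the paper offers is the single clause ``given our result, one can prove Blaom's by considering the distribution $\ker(\te)$.'' Your proposal is therefore not competing with a proof in the paper but rather fleshing out a one-line hint, and it does so correctly.

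One point of notation: you take the distribution to be $D=\ker(\om)\subseteq T\G$, whereas the paper writes $\ker(\te)$. Since $\te$ is a Cartan connection on $P$ and hence has $\ker(\te)=0$, while Blaom's distribution lives on the gauge groupoid, your $\ker(\om)$ is the intended object; the paper's phrasing is imprecise. Your forward direction is sound --- multiplicativity of $\om$ immediately gives $dm(D\times_{TM}D)\subseteq D$, and Lemma~\ref{multiplicativity_implies_s_transversality} combined with the zero-symbol condition yields the double complementarity $T\G=\ker(ds)\oplus D=\ker(dt)\oplus D$ that Blaom requires. Your converse, rebuilding $\om$ as right-translation of the projection $T_g\G\to\ker(d_gs)$ along $D$, is the standard manoeuvre, and your observation that the zero-symbol condition forces $E\cong A$ via $\om|_M$ is exactly what is needed to make $\om\leftrightarrow\ker(\om)$ a bijection rather than a many-to-one map. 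None of this appears in the paper; you have supplied the details the remark only gestures at.
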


\begin{obs}[clarification on the roles of fullness and Lie-Pfaffian properties]
 Through this paper, every Pfaffian groupoid will actually satisfy the two additional properties described in Definition \ref{def_Pfaffian_groupoid}, namely being full and Lie-Pfaffian. It would therefore seem natural to include them in the general definition, as indeed it has been done in \cite{Sal13}, when Pfaffian groupoids appeared from the first time.

However, there are two main reasons to keep those conditions separate from the main definition: there are natural examples of Pfaffian groupoids which do not satisfy them, and such properties are not invariant under Morita equivalence (see \cite{Cat20} for more details). We are currently investigating in \cite{Acc20} the role of those conditions from the point of view of Cartan geometries (and Cartan bundles).
\end{obs}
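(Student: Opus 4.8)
The statement bundles together two independent assertions, so the plan is to establish each separately, using throughout the translation provided by Theorem \ref{correspondence_pfaffian_groupoids_zero_symbol}: a multiplicative form $\om$ on a transitive gauge groupoid $\G = \Gaug(P)$ arises from an $H$-invariant form $\te \in \Om^1(P,\g)$, and Propositions \ref{transportation_multiplicative_via_Morita} and \ref{isomorphisms_symbol_space_Morita_equivalence} let one read off the properties of $\om$ from those of $\te$. In particular $\om$ is full exactly when $\te$ is pointwise surjective, and $\pi^*(\ker(\om)\cap\ker(ds)) = \ker(\te)$. First I would produce the counterexamples required by the first assertion. For fullness it is enough to take a $G$-structure regarded as a Cartan bundle $(P,\te)$ with $\te$ valued in $\g$ but with vanishing $\h$-component, i.e.\ with no choice of connection: then $\dim(\mathrm{Im}\,\te) = \dim(M) < \dim(\g)$, so neither $\te$ nor $\om$ is pointwise surjective. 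For the Lie-Pfaffian property I would take a Cartan bundle whose form is neither injective nor vanishing on the vertical bundle of $P$, i.e.\ one interpolating between the two extremes $\ker(\te)=0$ and $\ker(\te)=\ker(d\pi)$; a direct computation on $\Gaug(P)$ shows that $\ker(\om)\cap\ker(ds)$ and $\ker(\om)\cap\ker(dt)$ are controlled respectively by the vertical directions in the source and in the target fibre, and that these coincide precisely when $\te$ is compatible with the vertical bundle, as happens for Cartan geometries (where $\te(v^\dagger)=v$) and for tautological forms (where $\te$ kills verticals), but not in the generic case.

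For the failure of Morita invariance I would exploit the equivalence given by $P = s^{-1}(x)$ between a transitive groupoid over $M$ and its isotropy group $H$ over the single point $\{x\}$, where both $ds$ and $dt$ vanish identically. This degenerates the two conditions on the $H$-side in opposite ways, which is exactly the source of the non-invariance. For fullness, I would start from any Cartan geometry: the associated $(\G,\om)$ is full over $M$, but Proposition \ref{transportation_multiplicative_via_Morita} sends it to the group $H$ equipped with the zero form $0\in\Om^1(H,\g)$, which cannot be surjective since $\dim(H)=\dim(\h)<\dim(\g)$ whenever $\l\neq 0$; hence the Morita-equivalent object is not full. For the Lie-Pfaffian property the degeneration runs the other way: over a point the condition $\ker(\om)\cap\ker(dt)=\ker(\om)\cap\ker(ds)$ holds trivially, both sides equalling $TH$, so the non-Lie-Pfaffian Cartan bundle built above is Morita equivalent to a groupoid that \emph{is} Lie-Pfaffian, and the property is again not preserved.

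The main obstacle is the Lie-Pfaffian counterexample. Whereas fullness is settled by a dimension count, here multiplicativity already forces $\ker(\om)\cap\ker(ds)$ and $\ker(\om)\cap\ker(dt)$ to have equal rank, through the isomorphism $di$ used in the proof of Theorem \ref{correspondence_pfaffian_groupoids_zero_symbol}; only their relative position, not their dimension, can witness the failure, so the argument must descend to a genuinely pointwise analysis on $\Gaug(P)$ and exhibit a point at which the two subspaces differ. A secondary concern is the claim of \emph{naturalness}, which I would meet by drawing all the examples from $G$-structures and general Cartan bundles rather than from ad hoc constructions; the systematic treatment, together with non-transitive instances and the behaviour under arbitrary Morita equivalences, is what is deferred to \cite{Cat20}.
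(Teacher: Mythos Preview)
The statement you are addressing is a \emph{Remark} (environment \texttt{obs}), not a theorem; in the paper it carries no proof whatsoever. The author simply records the two claims and defers the details to \cite{Cat20} and \cite{Acc20}. There is therefore nothing to compare your argument against on the paper's side.

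That said, your proposed justification contains a genuine error in the Lie-Pfaffian counterexample. You suggest taking a Cartan bundle $(P,\te)$ with $0 \subsetneq \ker(\te) \subsetneq \ker(d\pi)$ and claim that the associated $(\Gaug(P),\om)$ fails to be Lie-Pfaffian. But Theorem~\ref{correspondence_cartan_bundles_pfaffian_groupoids} states precisely that \emph{every} Cartan bundle corresponds to a transitive full \emph{Lie-Pfaffian} groupoid; the condition $\ker(\te) \subseteq \ker(d\pi)$ in the definition of Cartan bundle is exactly what forces $\ker(\om)\cap\ker(ds) = \ker(\om)\cap\ker(dt)$, as is shown explicitly in the proof of that theorem. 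So no Cartan bundle, interpolating or otherwise, can furnish a non-Lie-Pfaffian example. To produce one you would have to leave the class of Cartan bundles entirely, for instance by dropping the requirement $\ker(\te) \subseteq \ker(d\pi)$, and then check separately that the resulting $\om$ is still multiplicative, of constant rank, and has $\g(\om)$ a Lie subalgebroid. This cascades into your Morita argument for the Lie-Pfaffian property, which rests on the same nonexistent example.

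Your treatment of fullness is closer to workable, though the ``$G$-structure with $\te$ valued in $\g$'' is not the Cartan bundle of Example~\ref{H_structure_is_cartan_bundle} (there $V=\RR^n$ and $\te$ \emph{is} surjective); you are silently replacing the coefficient representation, which is a legitimate but somewhat artificial move. The Morita argument for fullness via the isotropy group with the zero form is in the right spirit.
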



\subsection{Cartan bundles}

Given the discussion in the previous section, we present now a generalisation of Cartan geometries which arises from transitive full Lie-Pfaffian groupoids with non-zero symbol.

\begin{defn}\label{definition_Cartan_bundle}
A {\bf Cartan bundle} $(P,\te)$ is a principal $H$-bundle $P \xrightarrow{\pi} M$, for $H$ a Lie group, together with a representation $V \in \Rep(H)$ and a differential form $\te \in \Om^1 (P, V)$ such that
\begin{itemize}
 \item $\te$ is pointwise surjective 
 \item $\ker(\te) \subseteq \ker(d\pi)$
 \item $\ker(\te)$ is an involutive distribution on $P$
 \item $\te$ is $H$-equivariant, i.e.\
 $ (R_h)^*\te = h^{-1} \cdot \te$
 for every $h \in H$
 \item for every $v \in \h$, the vector $\te_p (v^\dagger_p) \in V$ does not depend on $p \in P$. \qedhere
 
 
\end{itemize}
\end{defn}

It follows by the first two requirements that $\dim(V)$ is bounded by the dimensions of $M$ and of $P$. As anticipated, Definition \ref{definition_Cartan_bundle} has the following two extreme cases, when $\ker(\te)$ is the largest or the smallest possible distribution, and $V$ has the smallest or the largest possible dimension. 

\begin{exmp}\label{Cartan_geometry_is_cartan_bundle}
Let $P \to M$ be a principal $H$-bundle and $(\g,\h)$ be a model geometry. Then Cartan bundles on $P$ with $\ker(\theta)=0$ and $V=\g$ are the same thing as Cartan geometries modelled on $(\g,\h)$.

Indeed, starting with a Cartan bundle with $\ker(\theta)=0$, the form $\theta$ becomes a pointwise isomorphism. Accordingly, $\theta_p: T_p P \to \g$ restricts to an isomorphism $\phi_p: \ker (d_p \pi) \to \h$ by dimensional reasons. Then for every $v \in \h$ we have $v = \phi_p (w)$ for some $w \in \ker(d_p \pi)$, and in turn $w = a_p (\alpha)$ for some $\alpha \in T_e H$ since $\ker(d_p \pi) = \Ima(a_p)$. We conclude that $\theta_p (v^\dagger ) = \theta_p (a_p (\alpha)) = \phi_p (a_p (\alpha)) = v$, hence $(P,\theta)$ is a Cartan geometry.

Conversely, if we start with a Cartan geometry, the distribution $\ker(\te)$ is zero, so it is trivially involutive and inside the vertical bundle; moreover, $\te_p (v^\dagger_p) = v$, so it does not depend on $p$.
 \end{exmp}

 \begin{exmp}\label{H_structure_is_cartan_bundle}
Let $H \subseteq GL(n,\RR)$ be a Lie subgroup. Then Cartan bundles on $P$ with $V = \RR^n$ (with the natural matrix representation of $H$) and $\ker(\te)=\ker(d\pi)$ are the same thing as abstract $H$-structures (Definition \ref{definition_abstract_structure}).

Indeed, the involutivity of $\ker(\theta)$ comes for free, and $\te_p (a_p (v)) = 0$ for every $v \in \h$ (since $v^\dagger$ is a vertical vector field), so it does not depend on $p$.
\end{exmp}

\begin{obs}
Recall from Proposition \ref{correspondence_cartan_geometries_g_structures_with_connections} that a Cartan geometry modelled on $(H \ltimes \RR^n, H)$ can be viewed as a $H$-structure together with a connection; in the framework of Cartan bundles, we have therefore decoupled the $H$-structure from the connection. Note also that a principal $H$-bundle $P$ together with a connection form $\te \in \Om^1(P,\h)$ is of course not a Cartan bundle: it does not satisfy the condition $\ker(\te) \subseteq \ker(d\pi)$, and $\ker(\te)$ is involutive only if it the connection is flat.
\end{obs}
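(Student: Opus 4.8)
The plan is to verify the two assertions separately, both of which follow directly from the defining properties of a principal connection. Recall that such a form $\te \in \Om^1(P,\h)$ is characterised by $\te(v^\dagger) = v$ for all $v \in \h$ together with $H$-equivariance, and that its kernel $\mathcal{H} := \ker(\te)$ is by construction the horizontal distribution, a complement to the vertical bundle $\ker(d\pi)$, so that $\mathcal{H}_p \oplus \ker(d\pi)_p = T_p P$ for every $p \in P$.

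For the first assertion I would observe that $d\pi_p$ restricts to a linear isomorphism $\mathcal{H}_p \arr T_{\pi(p)} M$, since $\ker(d\pi)_p$ is exactly the kernel of $d\pi_p$ and is complementary to $\mathcal{H}_p$. Hence, as soon as $\dim(M) \geq 1$, any nonzero horizontal vector projects to a nonzero vector on the base, so $\mathcal{H} \cap \ker(d\pi) = 0$ and in particular $\ker(\te) \not\subseteq \ker(d\pi)$, contradicting the second defining condition of a Cartan bundle.

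For the second assertion I would use the structure equation. Writing $\Om = d\te + \frac{1}{2}[\te,\te]$ for the curvature of the connection, the bracket term vanishes whenever both arguments are horizontal, so for horizontal vector fields $X, Y$ one computes
$$\Om(X,Y) = d\te(X,Y) = X(\te(Y)) - Y(\te(X)) - \te([X,Y]) = -\te([X,Y]).$$
Therefore $[X,Y]$ is again horizontal for all horizontal $X,Y$ if and only if $\Om(X,Y) = 0$ for all such pairs. Since the curvature is horizontal — it vanishes as soon as one of its arguments is vertical — it is determined by its restriction to horizontal pairs, so the latter condition is equivalent to $\Om = 0$, that is, to flatness of the connection. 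This shows that $\ker(\te)$ is involutive precisely when the connection is flat.

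I do not expect a genuine obstacle, as both facts are classical; the only point requiring minor care is the degenerate case $\dim(M) = 0$ in the first assertion, where $\mathcal{H} = 0 \subseteq \ker(d\pi)$ holds trivially, but this case is of no interest for Cartan bundles. The substantive content is the computation above relating the curvature to the failure of involutivity of the horizontal distribution.
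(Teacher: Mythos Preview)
Your justification is correct. Both points are classical facts about principal connections and you have recalled them accurately: the horizontal distribution $\ker(\te)$ is a complement to the vertical bundle (hence cannot lie inside it unless $\dim M = 0$), and the structure equation gives $\Om(X,Y) = -\te([X,Y])$ on horizontal pairs, so involutivity of $\ker(\te)$ is equivalent to flatness.

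Note, however, that in the paper this statement is a \emph{remark}: it is asserted without proof, the claims being taken as standard background on Ehresmann connections. So there is no ``paper's own proof'' to compare against --- you have simply supplied the (correct and standard) argument that the author leaves to the reader. Your handling of the degenerate case $\dim M = 0$ is appropriate; one might also remark that for the first assertion the pointwise-surjectivity axiom of a Cartan bundle would force $V = \h$ and hence $\dim M = 0$ via $\ker(\te)\subseteq\ker(d\pi)$, so the only way $(P,\te)$ could be a Cartan bundle is in that trivial situation.
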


As promised, Cartan bundles extend the correspondence from Theorem \ref{correspondence_pfaffian_groupoids_zero_symbol} to the more general case of transitive full Lie-Pfaffian groupoids with any symbol:

\begin{thmm}\label{correspondence_cartan_bundles_pfaffian_groupoids}
For any manifold $M$ there is a 1-1 correspondence
$$ \left\{   \begin{array}{c}
    \text{(isomorphism classes of)}  \\
    \text{transitive full Lie-Pfaffian groupoids on $M$}
    \end{array} \right\} 
\tilde{\longleftrightarrow}
\left\{   \begin{array}{c}
    \text{(isomorphism classes of)}  \\
    \text{Cartan bundles on $M$} 
    \end{array} \right\}. $$
\end{thmm}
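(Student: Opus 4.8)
The plan is to mimic the structure of the proof of Theorem~\ref{correspondence_pfaffian_groupoids_zero_symbol}, replacing the hypothesis ``zero symbol'' by ``arbitrary symbol'' throughout, and checking that each step still goes through, with ``pointwise isomorphism'' weakened to ``pointwise surjective with involutive kernel inside the vertical''. So, starting from a transitive full Lie-Pfaffian groupoid $(\G,\om)$ with representation $E$, I would fix a point $x\in M$, set $H:=\G_x$, and let $P:=s^{-1}(x)\xrightarrow{t}M$ be the principal $H$-bundle furnished by Proposition~\ref{correspondence_transitive_groupoids_principal_bundles}. I then define $V:=E_x$ with the $H$-module structure obtained by restricting the $\G$-representation on $E$ to the isotropy group, and set $\te\in\Om^1(P,V)$ by $\te_g(v):=g^{-1}\cdot\om_g(v)$, exactly as before. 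The $H$-equivariance $(R_h)^*\te=h^{-1}\cdot\te$ follows verbatim from Lemma~\ref{multiplicative_form_is_equivariant}, and pointwise surjectivity of $\te$ from fullness of $\om$.

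The three remaining bullets of the Cartan-bundle definition are where the non-zero symbol enters. For $\ker(\te)\subseteq\ker(d\pi)$: by Proposition~\ref{isomorphisms_symbol_space_Morita_equivalence} one has $\pi^*(\ker(ds)\cap\ker(\om))\cong TP\cap\ker(\te)=\ker(\te)$, so $\ker(\te)$ pulls back to the restriction of the symbol $\g(\om)$ along $\pi$; since $\g(\om)\subseteq\ker(ds)\cap\ker(dt)$ and $\ker(dt)|_P$ is precisely the vertical bundle of $P\xrightarrow{t}M$, this gives $\ker(\te)\subseteq\ker(d\pi)$. For involutivity of $\ker(\te)$: the Lie-Pfaffian condition $\ker(\om)\cap\ker(dt)=\ker(\om)\cap\ker(ds)$ together with condition~(3) of Definition~\ref{def_Pfaffian_groupoid} says the symbol space $\g(\om)$ is a Lie subalgebroid of $\Lie(\G)$; I would argue that the distribution $\ker(\te)$ on $P$ corresponds, under the identification $TP\subseteq\ker(dt)\subseteq T\G|_P$, to (a model for) this subalgebroid, and that closedness of $\g(\om)$ under the algebroid bracket translates into involutivity of $\ker(\te)$ as an honest distribution on $P$ --- this is the analogue of how the symbol being a Lie subalgebroid was the ``interesting'' hypothesis in Definition~\ref{def_Pfaffian_groupoid}. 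I expect this to be the main obstacle: one must carefully relate the Lie bracket of vector fields tangent to the distribution $\ker(\te)\subseteq TP$ with the Lie algebroid bracket on sections of $\g(\om)\subseteq\Lie(\G)$, using that sections of $\Lie(\G)$ restricting to sections of $\ker(ds)$ can be interpreted as right-invariant vector fields on $\G$ whose restriction to $P=s^{-1}(x)$ lies in $TP$.

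For the converse direction I would take a Cartan bundle $(P,\te)$ with $P\xrightarrow{\pi}M$ a principal $H$-bundle, $V\in\Rep(H)$, and $\te\in\Om^1(P,V)$, form the gauge groupoid $\G:=\Gaug(P)$ via Proposition~\ref{correspondence_transitive_groupoids_principal_bundles}, the representation $E:=P[V]\in\Rep(\G)$, and the form $\om\in\Om^1(\G,t^*E)$ defined by $\om_{[p,q]}([v,w])=\te_p(v)-[p,q]^{-1}\cdot\te_q(w)$, which is well-defined and multiplicative by Proposition~\ref{transportation_multiplicative_via_Morita} applied to $\te$ on $P$ and the zero form on $H$ (using Example~\ref{multiplicative_wrt_zero_is_invariant} and the $H$-equivariance of $\te$). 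Pointwise surjectivity of $\om$ (hence fullness) follows from that of $\te$; that $\om$ has constant rank follows because $\ker(\te)$ is a (constant-rank) distribution and $\pi^*(\ker(ds)\cap\ker(\om))\cong\ker(\te)$ via Proposition~\ref{isomorphisms_symbol_space_Morita_equivalence}. The Lie-Pfaffian condition $\ker(\om)\cap\ker(dt)=\ker(\om)\cap\ker(ds)$ I would get from the symmetry $[p,q]\mapsto[q,p]$ of the gauge groupoid together with the identity $(i^*\om)_g=-g^{-1}\cdot\om_g$: $di$ exchanges $\ker(ds)$ and $\ker(dt)$ and preserves $\ker(\om)$, and the symbol subbundle it produces on the target side agrees with the one on the source side precisely because both are $\pi^*\ker(\te)$. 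Finally the symbol space $\g(\om)=(\ker(\om)\cap\ker(ds))|_M$ is a Lie subalgebroid of $\Lie(\G)$ because it corresponds to the involutive distribution $\ker(\te)\subseteq\ker(d\pi)$ on $P$ --- the same bracket-versus-involutivity dictionary as above, run in reverse.

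It remains to check that the two constructions are mutually inverse on isomorphism classes and natural, so that they descend to a genuine bijection. Here I would note that both directions are already known to be inverse at the level of Proposition~\ref{correspondence_transitive_groupoids_principal_bundles} (transitive groupoids $\leftrightarrow$ principal bundles), and that the passage $\te\leftrightarrow\om$ is the one governed by Proposition~\ref{transportation_multiplicative_via_Morita} / Proposition~\ref{isomorphisms_symbol_space_Morita_equivalence}, which is set up precisely as a correspondence of multiplicative forms under the Morita equivalence $P\times P\rightrightarrows P$ vs.\ $H\rightrightarrows *$; so functoriality and the inverse property for the forms follow from the cited appendix results, applied verbatim as in the proof of Theorem~\ref{correspondence_pfaffian_groupoids_zero_symbol}. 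Thus the only genuinely new content, compared to that earlier theorem, is the translation between ``$\g(\om)$ is a Lie subalgebroid'' and ``$\ker(\te)$ is involutive and contained in the vertical bundle'', which is the step I would write out in full detail.
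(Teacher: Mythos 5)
Your two constructions, the appendix results you invoke, and the way you split the work are exactly those of the paper's proof, and you correctly single out the dictionary between involutivity of $\ker(\te)$ and the symbol $\g(\om)$ being a Lie subalgebroid as the genuinely new content (the paper asserts that translation with no more detail than you offer, so nothing is lost there). The one place where you depart from the paper is the verification that the gauge groupoid form is of Lie type, and there your argument has a gap. The inversion identity $(i^*\om)_g=-g^{-1}\cdot\om_g$ only shows that $d_g i$ maps $\ker(d_g s)\cap\ker(\om_g)$ isomorphically onto $\ker(d_{g^{-1}}t)\cap\ker(\om_{g^{-1}})$, i.e.\ it relates subspaces sitting at the two \emph{different} arrows $g$ and $g^{-1}$, whereas the Lie-Pfaffian condition of Definition \ref{def_Pfaffian_groupoid} demands the equality of $\ker(ds)\cap\ker(\om)$ and $\ker(dt)\cap\ker(\om)$ inside $T_g\G$ at each fixed arrow $g$. (This is why the inversion trick does suffice in the proof of Theorem \ref{correspondence_pfaffian_groupoids_zero_symbol}: there both intersections are being shown to vanish, so an isomorphism statement is enough; with non-zero symbol it is not.) Your supplementary claim that ``both are $\pi^*\ker(\te)$'' does not repair this: Proposition \ref{isomorphisms_symbol_space_Morita_equivalence} produces an isomorphism, via the infinitesimal action, between the $s$-side symbol and $\ker(\te)$, not an equality of the two subbundles of $T\G$ at a general arrow.

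What the paper does instead is unwind the definition of $\om$ at an arrow $[p,q]$. Writing tangent vectors as classes $[v,w]$ with $v\in T_pP$, $w\in T_qP$ modulo the diagonal infinitesimal $\h$-action $a$, every element of $\ker(d_{[p,q]}s)\cap\ker(\om_{[p,q]})$ has a representative $[u,0]$ with $u\in\ker(\te_p)$, and every element of $\ker(d_{[p,q]}t)\cap\ker(\om_{[p,q]})$ has one of the form $[0,w]$ with $w\in\ker(\te_q)$. Since $\ker(\te)\subseteq\ker(d\pi)=\Ima(a)$, one writes $u=a_p(\al)$ for some $\al\in\h$ and rewrites $[u,0]=[0,-a_q(\al)]$; the needed membership $a_q(\al)\in\ker(\te_q)$ comes from the identity $\te_p(a_p(\al))=\te_q(a_q(\al))$, which is precisely what makes $\om$ well defined on the quotient $(P\times P)/H$ in the first place. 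This pointwise computation --- the paper's chain $\ker(ds)\cap\ker(\om)=[\ker(\te),\Ima(a)]=[\Ima(a),\ker(\te)]=\ker(dt)\cap\ker(\om)$ --- is the step you would need to write out; the inversion argument cannot replace it.
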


\begin{proof}
The proof goes like in Theorem \ref{correspondence_pfaffian_groupoids_zero_symbol}. Let $(\G,\om)$ be a transitive full Lie-Pfaffian groupoid, with $\om$ taking values in $E \in \Rep(\G)$, and fix any $x \in M$. Then $V: = E_x$ is a representation of the isotropy group $H:= \G_x$, and the principal $H$-bundle $P:= s^{-1}(x)$, with projection $\pi:= t_{\mid P}: P \arr M$, is a Cartan bundle with the differential form
$$\te \in \Om^1 (P,V), \quad \quad \te_g (v) := g^{-1} \cdot \om_g (v).$$
Let us show that $\te$ satisfies indeed the five properties of Definition \ref{definition_Cartan_bundle}. First, $\theta$ is pointwise surjective since $\om$ is. From Lemma \ref{multiplicative_form_is_equivariant} (based on the multiplicativity of $\om$), it follows that $\te$ is $H$-equivariant:
$$ ((R_h)^* \te)_g (v) = \te_{gh} (d R_h (v)) = (gh)^{-1} \cdot \om_{gh} (dR_h (v)) = $$
$$ = h^{-1} g^{-1} \cdot ((R_h)^*\om)_g (v) = h^{-1} \cdot g^{-1} \cdot \om_g (v) = h^{-1} \cdot \te_g (v).$$
Moreover, $\ker(\te)$ can be computed as
$$ \ker (\te) = TP \cap \ker(\om) = \ker(ds) \cap \ker(\om)_{\mid P}.$$
As in Theorem \ref{correspondence_pfaffian_groupoids_zero_symbol}, one uses the right translations of $\G$ to show that $\ker(\te) \cong \g(\om)$. In turn, the symbol space $\g(\om)$ is a Lie subalgebroid of $\Lie(\G)$, hence the distribution $\ker(\te)$ is involutive. Similarly, since $(\G, \om)$ is Lie-Pfaffian, $\g(\om) \subseteq \ker(dt)$, hence $\ker(\te)$ is contained in $\ker(dt) \cap TP = \ker(d\pi)$. 


Last, we check that, for every $v \in \h$, $\theta_g (v^\dagger_g)$ does not depend on $g \in P$:
$$ \theta_g (v^\dagger_g) = \theta_g (d_e m (g,\cdot) (v) ) = g^{-1} \cdot \omega_g (d_e L_g (v) ) = $$
$$ = g^{-1} \cdot (L_g^*\omega)_e (v) = g^{-1} \cdot g \cdot \omega_e (v) = \omega_e (v),$$
where in the second line we used Lemma \ref{multiplicative_form_is_equivariant}.

%
%
%

\

Conversely, consider a Cartan bundle $(P,\te)$; we are going to use Proposition \ref{transportation_multiplicative_via_Morita} in order to produce a multiplicative form $\om$ on the gauge groupoid $\G := (P \times P) /H$ with values in the representation $E := P[V]$. Consider the form $\omega_H \in \Om^1 (H, V)$ defined by
$$ (\omega_H)_g (\alpha) := \theta_p (a_p ( d_g R_{g^{-1}} (\alpha)) ) \quad \quad \text{for some } p \in P;$$
the expression does not depend on $p \in P$ since $(P,\theta)$ is a Cartan bundle, and one checks easily that $\omega_H$ is multiplicative. Since $\te$ is $H$-equivariant by hypothesis and $\theta_p (a_p(\alpha)) = (\omega_H)_e (\alpha)$ by construction, we conclude that the $H$-action on $P$ is multiplicative w.r.t.\ $\theta$ and $\omega_H$ (Proposition \ref{multiplicative_action_principal_bundles}). As anticipated, it follows from Proposition \ref{transportation_multiplicative_via_Morita} that the following differential form $\om \in \Om^1 (\G, t^*E)$:
$$ \om_{[p,q]} ([v,w]) := \te_p (v) - [p,q] \cdot \te_q (w)$$
is well defined and multiplicative.

The last part of Proposition \ref{transportation_multiplicative_via_Morita} guarantees also that
$$ \pi^* (\ker(ds) \cap \ker(\om)_{\mid M} ) \cong \ker(\te).$$
Since $\ker(\te)$ is involutive, $\g(\om) = \ker(ds) \cap \ker(\om)_{\mid M} \subseteq \Lie(\G)$ is involutive as well, hence is a subalgebroid.

It follows from the definition of $\om$ and the fact that $\ker(\theta) \subseteq \ker(d\pi)$ that
$$ \ker (ds) \cap \ker(\om) = [\ker(\te), \Ima(a)] = [\Ima(a), \ker(\te)] = \ker (dt) \cap \ker(\om),$$
hence $(\G,\om)$ is of Lie type. Last, $\om$ is pointwise surjective since $\te$ is so: indeed, for every $g=[p,q] \in \G$ and $v \in E_{t(g)} = E_{\pi(p)} = \te_p (T_p P)$, we have $\om_g ([\al,0]) = \te_p (\al) = v$ for some $\al \in T_p P$.
\end{proof}

In view of Example \ref{Cartan_geometry_is_cartan_bundle}, we see that Theorem \ref{correspondence_cartan_bundles_pfaffian_groupoids} fully recovers Theorem \ref{correspondence_pfaffian_groupoids_zero_symbol} in the case $\ker(\theta)=0$. On the other hand, in the setting of Example \ref{H_structure_is_cartan_bundle}, the maximal distribution $\ker(\theta) = \ker(d\pi)$ of a Cartan bundle $(P,\theta)$ corresponds to the maximal symbol space $\g(\omega)$ of a Lie-Pfaffian groupoid $(\G,\om)$, namely the isotropy Lie algebra bundle $\g(\omega) = \ker(\rho)$ (for $\rho: A \to TM$ the anchor map):
 $$ \left\{   \begin{array}{c}
    \text{(isomorphism classes of)}  \\
    \text{transitive full Lie-Pfaffian groupoids over $M$} \\
    \text{with symbol $\ker(\rho)$}
    \end{array} \right\} 
\tilde{\longleftrightarrow}
\left\{   \begin{array}{c}
    \text{(isomorphism classes of)}  \\
    \text{abstract $H$-structures over $M$}
    \end{array} \right\}. $$

 This correspondence should be compared with that of Proposition \ref{correspondence_cartan_geometries_g_structures_with_connections} and the discussion thereafter: as promised, we had introduced a Cartan-like structure in order to be able to consider (abstract) $H$-structures without the choice of a connection.
 
 It is also interesting to look at the differential form $\omega_H$ introduced in the previous proof in our two extreme cases of Cartan bundles:
\begin{itemize}
 \item when $(P,\theta)$ is a Cartan geometry, then $(\omega_H)_g (\alpha) = d_g R_{g^{-1}}(\alpha)$, i.e.\ $\omega_H$ is the Maurer-Cartan form of $H$;
 \item when $(P,\theta)$ is an abstract $H$-structure, then $\omega_H$ becomes the zero form since $\Ima(a) =\ker(d\pi) = \ker(\theta)$.
\end{itemize}

We conclude by studying some properties of the coefficients of a Cartan bundle.

\begin{thm}[Representation associated to a Cartan bundle]\label{representation_of_cartan_bundle}
 Let $(P,\te)$ be a Cartan bundle and $\G = \Gaug(P)$ the associated Pfaffian groupoid from Theorem \ref{correspondence_cartan_bundles_pfaffian_groupoids}.
 Then the fibre of the representation $E = P[V] \in \Rep(\G)$ splits as
 $$ E_x \cong T_x M \oplus T_{1_x} H / \g_{1_x}(\om),$$
where $\g(\om)$ is the symbol space of $(\G,\om)$ (Definition \ref{def_Pfaffian_groupoid}).
 Moreover, the linear $\G$-action on $E$ restricts to the following action on $TM$:
\begin{equation}\label{pfaffian_representation}
 g \cdot v = d_g t (\al), \quad \quad \forall g \in s^{-1}(x), v \in T_xM \tag{*}
\end{equation}
where $\al$ is any element of $\ker(\om_g)$ such that $v = d_g s(\al)$.
\end{thm}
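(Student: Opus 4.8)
The plan is to construct a canonical surjective bundle map $\rho_E\colon E\to TM$ and show that it intertwines the $\G$-actions, reading both assertions off it. At the unit $1_x$, exactly as in the proof of Theorem~\ref{correspondence_pfaffian_groupoids_zero_symbol}: fullness together with Lemma~\ref{multiplicativity_implies_s_transversality} makes $\om_{1_x}$ map $A_x=\ker(d_{1_x}s)$ onto $E_x$ with kernel $\g_{1_x}(\om)$; the Lie-type condition gives $\ker\om_{1_x}\cap\ker d_{1_x}s=\ker\om_{1_x}\cap\ker d_{1_x}t$, whence $\g_{1_x}(\om)\subseteq(\ker\rho_A)_x$, the isotropy Lie algebra of $\G$ at $x$; and a choice of frame $p_0\in\pi^{-1}(x)$ identifies $s^{-1}(x)\cong P$, $(P[V])_x\cong V$, $\om_{1_x}\leftrightarrow\te_{p_0}$, $\rho_A\leftrightarrow d\pi_{p_0}$ and $(\ker\rho_A)_x\leftrightarrow\ker d\pi_{p_0}\cong\h=T_eH$, carrying $\g_{1_x}(\om)$ to $\{v\in\h:v^\dagger_{p_0}\in\ker\te_{p_0}\}\subseteq T_eH$. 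I then define $\rho_E\bigl(\om_{1_x}(\al)\bigr):=\rho_A(\al)$; this is well defined since $\g_{1_x}(\om)\subseteq\ker\rho_A$, surjective since $\rho_A$ is (transitivity of $\G$), and its kernel at $x$ is $\om_{1_x}\bigl((\ker\rho_A)_x\bigr)\cong(\ker\rho_A)_x/\g_{1_x}(\om)\cong T_eH/\g_{1_x}(\om)$. Thus $0\to\ker\rho_E\to E\xrightarrow{\rho_E}TM\to 0$ is exact with $(\ker\rho_E)_x\cong T_eH/\g_{1_x}(\om)$, and a splitting of this sequence gives $E_x\cong T_xM\oplus T_eH/\g_{1_x}(\om)$, which is the first assertion.

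Next I would check that \eqref{pfaffian_representation} is a genuine $\G$-action on $TM$: an admissible $\al$ exists because $\ker\om_g$ is $s$-transversal (Lemma~\ref{multiplicativity_implies_s_transversality}) and $\G$ is transitive; $d_gt(\al)$ is independent of the choice because the Lie-type condition gives $\ker\om_g\cap\ker d_gs\subseteq\ker d_gt$; the unit axiom follows from the standard vanishing of $\om$ along the unit section together with $\g_{1_x}(\om)\subseteq\ker\rho_A$; and associativity is where multiplicativity enters, since $\om_g(\al)=0=\om_h(\be)$ forces $\om_{hg}(dm(\be,\al))=\om_h(\be)+h\cdot\om_g(\al)=0$, while $d_{hg}s$ and $d_{hg}t$ of $dm(\be,\al)$ compute the composite $g\cdot v$.

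The key step is that $\rho_E$ intertwines the $\G$-action on $E$ with \eqref{pfaffian_representation}. Unravelling the construction of $\om$ (equivalently, its pullback along $P\times P\to\G$ is $\pr_1^*\te-\pr_2^*\te$ after the canonical trivialisation of the pulled-back coefficient bundle), for $g=[p_1,p_0]$ with $x=\pi(p_0)$, $y=\pi(p_1)$ one has $\om_g([v_1,v_0])=\te_{p_1}(v_1)-\te_{p_0}(v_0)$ under the identifications above. Given $\xi\in E_x$, write $\xi=[p_0,\nu]$ and choose, by surjectivity of $\te$, vectors $v_0\in T_{p_0}P$, $v_1\in T_{p_1}P$ with $\te_{p_0}(v_0)=\te_{p_1}(v_1)=\nu$. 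Then $\rho_E(\xi)=d\pi_{p_0}(v_0)=:v$ and $\rho_E(g\cdot\xi)=\rho_E([p_1,\nu])=d\pi_{p_1}(v_1)$, while $\al:=[v_1,v_0]\in T_g\G$ has $\om_g(\al)=\nu-\nu=0$, $d_gs(\al)=d\pi_{p_0}(v_0)=v$ and $d_gt(\al)=d\pi_{p_1}(v_1)$. Hence $\al$ is admissible in \eqref{pfaffian_representation}, giving $g\cdot v=d_gt(\al)=d\pi_{p_1}(v_1)=\rho_E(g\cdot\xi)$, i.e.\ $\rho_E(g\cdot\xi)=g\cdot\rho_E(\xi)$; so $\rho_E$ is $\G$-equivariant and the $\G$-action on $E$ induces precisely \eqref{pfaffian_representation} on $TM$.

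I expect the main difficulty to lie not in any isolated step but in the bookkeeping with the gauge-groupoid identifications ($s^{-1}(x)\cong P$, $(P[V])_x\cong V$, $\om_{1_x}\leftrightarrow\te_{p_0}$, the pullback of $\om$ to $P\times P$), which must be arranged carefully enough that the last computation with $\al=[v_1,v_0]$ is rigorous. A related point worth flagging is that the decomposition $E_x\cong T_xM\oplus T_eH/\g_{1_x}(\om)$ — equivalently, the realisation of $\g_{1_x}(\om)$ as a subspace of $T_eH$ — is canonical only at the level of the short exact sequence, depending on the frame $p_0$ up to conjugation, whereas $\rho_E$ and \eqref{pfaffian_representation} are canonical; and the well-definedness of \eqref{pfaffian_representation} is precisely where the Lie-type property of $(\G,\om)$, built from the Cartan-bundle axioms in Theorem~\ref{correspondence_cartan_bundles_pfaffian_groupoids}, re-enters.
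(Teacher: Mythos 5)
Your proposal is correct and follows essentially the paper's own route: the splitting comes from the same two-step filtration (your $\g_{1_x}(\om)\subseteq\ker\rho_A\subseteq A_x$ is, under the frame identification of Proposition \ref{isomorphisms_symbol_space_Morita_equivalence}, the paper's $\ker(\te_p)\subseteq\ker(d_p\pi)\subseteq T_pP$), and your equivariance argument via $\al=[v_1,v_0]\in\ker(\om_g)$ with $d_gs(\al)=d_{p_0}\pi(v_0)$, $d_gt(\al)=d_{p_1}\pi(v_1)$ is exactly the paper's computation with $\Phi$ in place of your $\rho_E$. The only addition is your explicit check that \eqref{pfaffian_representation} is a well-defined $\G$-action, which the paper assumes from its earlier use in Theorem \ref{correspondence_pfaffian_groupoids_zero_symbol}.
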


We remark that the $\G$-representation \eqref{pfaffian_representation} is independent from the Cartan bundle structure of $P$. Indeed, any Lie-Pfaffian groupoid (not necessarily transitive) admits such a representation on the tangent space of its base.

\begin{proof}
For any $x = \pi(p) \in M$ it is immediate to check that
$$ E_x = V = \Ima(\te_p) \cong T_p P / \ker(\te_p) \cong T_p P / \ker(d_p\pi) \oplus \ker(d_p\pi) / \ker(\te_p) \cong $$
$$ \cong T_x M \oplus T_{1_x} \G_x / \g_{1_x}(\om) \cong T_x M \oplus T_{1_x} H / \g_{1_x}(\om).$$

For the second part, consider the standard representation of $\G = (P \times P)/H$ on $E = (P \times V)/H$
\begin{equation}\label{standard_representation}
[p,q] \cdot [q,z] = [p,z], \tag{**}
\end{equation}
and the projection of $E$ on $TM$
$$ \Phi: E \arr TM, \quad [q,z] \mapsto d_q \pi (w),$$
where $z = \te_q (w)$ for some $w \in T_q P$. We are going to prove that
$$ g \cdot \Phi (v) = \Phi (g \cdot v ),$$
for every $g = [p,q] \in \Gaug(P)$ and $v = [q,z] \in E$; here on the left-hand side of the equality we applied the representation \eqref{pfaffian_representation} on $TM$, on the right-hand one the representation \eqref{standard_representation} on $E$. Accordingly, the two sides of the equation become
$$ g \cdot \Phi (v) = g \cdot d_q \pi (w),$$
$$ \Phi (g \cdot v ) = \Phi ( [p,\al] ) = d_p \pi (w'),$$
for some $w \in T_q P$ and $w' \in T_p P$ such that $z = \te_q (w) = \te_p (w')$.

Consider now the vector $\be = [w',w] \in T_g \G$; by construction, $\be \in \ker(\om_g)$ and $d_g s (\be) = d_q \pi(w)$. This implies that
$$ g \cdot d_q \pi (w) = d_g t (\be) = d_p \pi (w'),$$
which concludes our proof.
\end{proof}

\begin{exmp}
It is interesting to describe the splitting of the representation $E$ from Proposition \ref{representation_of_cartan_bundle} in the two particular cases we have examined. For a Cartan geometry $(P,\te)$ (Example \ref{Cartan_geometry_is_cartan_bundle}), the symbol space $\g(\om)$ of the associated Pfaffian groupoid $(\G,\om)$ is zero, so that the fibre of its representation is
$$ E_x = \h \oplus T_x M.$$
In the case when $(P,\theta)$ is reductive (Definition \ref{def_reductive}), this can also be seen directly: since $\g = \h \oplus \l$ splits as the sum of $H$-modules, the entire representation $E = P[\g]$ splits as the sum of $H$-representations as well
$$ E = P[\h] \oplus P[\l] \cong P[\h] \oplus TM,$$
where we identified $\l$ with $\g/\h$ and used Remark \ref{tangent_bundle_Cartan_geometry}.

On the other hand, for a $H$-structure $(P,\te)$ (Example \ref{H_structure_is_cartan_bundle}), the symbol space $\g(\om)$ of the associated Pfaffian groupoid $(\G,\om)$ is the isotropy algebra bundle $\ker(\rho)$, which has as standard fibre the Lie algebra of $H$, so that the term $T_{1_x} H / \g(\om)_{1_x}$ disappears and
\begin{equation*}
E = TM. \qedhere
\end{equation*}

Inspired by these extreme examples, one could also define the class of \textit{reductive} Pfaffian groupoids, described by suitable splittings of $E$ into $TM$ and another (possibly zero) representation. This will be object of a future work, which will lead to a theory of \textit{reductive} Cartan bundles, which aims at proving results analoguous to Remark \ref{tangent_bundle_Cartan_geometry} and at describing ``intermediate'' examples of Cartan bundles, which are not necessarily Cartan geometries or $G$-structures.
\end{exmp}


We conclude this paper by mentioning that we plan to further investigate in \cite{Acc20} the infinitesimal object associated to a Cartan bundle, building up on the works mentioned in Remark \ref{obs_relation_other_works}, e.g.\ on Cartan algebroids \cite{Bla06} and Spencer operators \cite{Cra12}. Indeed, it is our intention to develop a Lie theory for Cartan bundles, which will allows us to ``differentiate'' the global objects and ``integrate'' their infinitesimal counterpart, exactly like one does for standard Lie groupoids and Lie algebroids. This approach will be particularly advantageous to tackle (difficult) global problems by solving the corresponding (easier) infinitesimal ones.


\appendix

\section{Appendix}

In this appendix, necessary to understand the proofs of Section \ref{section_Pfaffian}, we collected some basic definitions and properties of multiplicative forms on Lie groupoids, as well as of Lie groupoid actions compatible with a multiplicative form. Some results are not standard, and constitute a particular case of general statements proved in the author's PhD thesis \cite{Cat20}.

\subsection{Multiplicative forms}


\begin{defn}\label{multiplicative_form}
Let $\mathcal{G}$ be a Lie groupoid; a differential form $\om \in \Om^k (\mathcal{G})$ is called {\bf multiplicative} if
$$m^* \om = \pr_1^* \om + \pr_2^* \om,$$
where $m: \mathcal{G} \tensor[_s]{\times}{_t} \mathcal{G} \subseteq \mathcal{G} \times \mathcal{G} \arr \mathcal{G}$ is the multiplication of $\mathcal{G}$ and $\pr_i: \mathcal{G} \tensor[_s]{\times}{_t} \mathcal{G} \arr \mathcal{G}$ are the projections on the $i^\text{th}$-component.
\end{defn}

Multiplicative forms arise naturally in many geometric contexts, e.g.\ to study symplectic or contact structures on Lie groupoids.
In this paper we consider forms with coefficients; to make sense of the multiplicativity condition, the coefficient must be the pullback bundle $t^*E$ of a representation $E$ of $\G$.


\begin{defn}\label{generalisation_multiplicative}
Let $\mathcal{G}$ be a Lie groupoid and $E$ a representation of $\G$; a differential form $\om \in \Om^k (\mathcal{G}, t^* E)$ is called {\bf multiplicative} if
\begin{equation*}
(m^* \om )_{(g,h)} = (\pr_1^* \om)_{(g,h)} + g \cdot (\pr_2^* \om)_{(g,h)} \quad \forall (g, h) \in \mathcal{G} \tensor[_s]{\times}{_t} \mathcal{G}.
\end{equation*}
To keep the notation simple, we will often write
\begin{equation*}
m^*\om = \pr_1^*\om + g \cdot \pr_2^* \om. \qedhere
\end{equation*}
\end{defn}


Here are two simple but fundamental properties of multiplicative 1-forms.

\begin{lem}\label{multiplicative_form_is_equivariant}
Let $\mathcal{G}$ be a Lie groupoid, $E$ a representation and $\om \in \Om^1(\mathcal{G}, t^*E)$ a multiplicative form. Then, for every arrow $g \in \mathcal{G}$ from $x$ to $y$:
\begin{itemize}
 \item 
 $ (L_g)^* (\om_{| t^{-1}(y)}) = g \cdot \om_{| t^{-1}(x)}, $
 \item 
 $ (R_g)^* (\om_{| s^{-1}(x)}) = \om_{| s^{-1}(y)}.$
\end{itemize}
\end{lem}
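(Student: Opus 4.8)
The plan is to unwind Definition \ref{generalisation_multiplicative} by restricting the multiplicativity identity for $\om$ to suitable submanifolds of $\G_2$ along which left- or right-translation appears. The key observation is that for a fixed arrow $g$ from $x$ to $y$, the maps $L_g : t^{-1}(x) \to t^{-1}(y)$ and $R_g : s^{-1}(y) \to s^{-1}(x)$ are exactly the partial multiplication maps obtained by freezing one factor in $m$; so both identities should fall out of $m^*\om = \pr_1^*\om + g\cdot\pr_2^*\om$ by plugging in the appropriate slice.

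For the second identity, first I would consider the embedding $\iota_g : s^{-1}(x) \hookrightarrow \G_2$, $h \mapsto (h, g)$ (valid since $s(h) = x = t(g)$ means $(h,g)\in\G_2$ when we write composable pairs as $(\text{arrow into }y,\dots)$ — here I am careful that $m(h,g) = hg$ lands in $s^{-1}(s(g))=s^{-1}(x)$ and equals $R_g(h)$). Pulling back the multiplicativity identity along $\iota_g$ gives $(m\circ\iota_g)^*\om = \iota_g^*\pr_1^*\om + \iota_g^*(h\cdot\pr_2^*\om)$. Now $m\circ\iota_g = R_g$, $\pr_1\circ\iota_g = \mathrm{id}$, and $\pr_2\circ\iota_g$ is the constant map at $g$; since $\om$ is a \emph{form} (degree $1$), the pullback of $\om$ along a constant map vanishes, killing the last term. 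This yields $(R_g)^*(\om_{|s^{-1}(y)}) = \om_{|s^{-1}(x)}$, up to matching source/target labels carefully. Only here one must check that the coefficient bundle $t^*E$ behaves correctly: along $s^{-1}(x)$ the target varies, and $R_g$ maps $s^{-1}(y)$ onto $s^{-1}(x)$ covering the identity on the common target values, so no twist by $E$ intervenes, consistent with the absence of a $g\cdot$ prefactor on the right.

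For the first identity, I would instead use the embedding $j_g : t^{-1}(x) \hookrightarrow \G_2$, $h \mapsto (g,h)$ (composable since $s(g) = x = t(h)$), so that $m\circ j_g = L_g$ and $\pr_1\circ j_g$ is constant at $g$ while $\pr_2\circ j_g = \mathrm{id}$. Pulling back multiplicativity along $j_g$: the $\pr_1^*\om$ term dies (constant map, degree-$1$ form), and the surviving term is $j_g^*(g\cdot\pr_2^*\om)$. The subtlety is that the prefactor "$g\cdot$" in Definition \ref{generalisation_multiplicative} is the pointwise action of the varying first argument; restricted along $j_g$ the first argument is the constant $g$, so this becomes genuine multiplication by the fixed element $g$ acting $E_{t(h)}\to E_{t(gh)} = E_y$, i.e.\ exactly $g\cdot\om_{|t^{-1}(x)}$. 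Hence $(L_g)^*(\om_{|t^{-1}(y)}) = g\cdot\om_{|t^{-1}(x)}$.

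The only real obstacle is bookkeeping: keeping the source/target fibres and their labels straight (which fibre $L_g$ and $R_g$ map where, and over which base points the coefficient fibres sit), and being precise about how the pointwise prefactor $g\cdot(-)$ in the multiplicativity axiom specializes when one argument is frozen. Once the correct slices $\iota_g$ and $j_g$ are chosen and one notes that pulling back a $1$-form along a constant map gives zero, both statements are immediate. I would present it by stating the two slice maps, recording $m\circ\iota_g = R_g$, $m\circ j_g = L_g$, and then reading off the two identities, with a one-line remark on why the constant-factor terms vanish and why the $g\cdot$ prefactor survives only in the $L_g$ case.
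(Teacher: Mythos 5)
Your argument is correct and is essentially the paper's own proof: the paper evaluates the multiplicativity identity on tangent vectors of the form $(0,Y)$ and $(X,0)$, which is exactly your restriction along the slices $j_g$ and $\iota_g$, with the vanishing of $\om_g(0)$ playing the role of your ``pullback along a constant map is zero'' observation, and the frozen prefactor surviving only in the $L_g$ case. Only fix the labels in the $R_g$ case: with $g\colon x\to y$ composability forces $s(h)=t(g)=y$, so $\iota_g$ is defined on $s^{-1}(y)$, $R_g$ maps $s^{-1}(y)$ to $s^{-1}(x)$, and the conclusion reads $(R_g)^*\(\om_{|s^{-1}(x)}\)=\om_{|s^{-1}(y)}$, not the reverse.
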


\begin{proof}
For any $(g,h) \in \mathcal{G} \tensor[_s]{\times}{_t} \mathcal{G}$ and $Y \in T_h (t^{-1} (s(g)))$, we have
$$ d_h L_g (Y) = d_h m(g, \cdot) (Y) = d_{(g,h)} m_{| T( \{g\} \times t^{-1} (s(g)) )} (0,Y),$$
where the last equality comes from a straightforward computation using tangent curves. Therefore, using the multiplicativity of $\om$, we obtain
$$ ((L_g)^* \om)_h (Y) = \om_{g \cdot h} (d_h L_g (Y) ) = $$
$$ = \om_{m (g,h)} (d_{(g,h)} m_{| T( \{g\} \times t^{-1} (s(g)) )} (0,Y) ) = \cancel{\om_g (0)} + g \cdot \om_h (Y). $$
With the same arguments, for any $(h, g) \in \mathcal{G} \tensor[_s]{\times}{_t} \mathcal{G}$ and $X \in T_g (s^{-1} (t(g)))$ we have
$$ d_h R_g (X) = d_h m(\cdot, g) (X) = d_{(h,g)} m_{| T( s^{-1} (t(g)) \times \{ g \} ) } (X,0),$$
and we conclude that
\begin{equation*}
 ((R_g)^* \om)_h (X) = \om_h (X) + \cancel{h \cdot \om_g (0)}. \qedhere 
\end{equation*}
\end{proof}

\begin{lem}\label{multiplicativity_implies_s_transversality}
Let $\mathcal{G}$ be a Lie groupoid, $E$ a representation and $\om \in \Om^1(\mathcal{G}, t^*E)$ a multiplicative form. If $\om$ is of constant rank, then it is $s$-transversal, i.e.\
$$ \ker(\om_g) + \ker(d_g s) = T_g \mathcal{G} \quad \forall g \in \mathcal{G},$$
as well as $t$-transversal, i.e.\
$$ \ker(\om_g) + \ker(d_g t) = T_g \mathcal{G} \quad \forall g \in \mathcal{G}.$$
\end{lem}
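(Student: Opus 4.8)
The plan is to prove $s$-transversality first and then obtain $t$-transversality by the mirror argument. The key is that a multiplicative 1-form of constant rank behaves well under left and right translations, which is exactly the content of Lemma \ref{multiplicative_form_is_equivariant}. The strategy is to reduce the transversality statement at an arbitrary arrow $g$ (from $x$ to $y$) to the corresponding statement at the unit $1_y$, where it becomes a more tractable condition involving only the Lie algebroid and the restriction of $\om$ to a single source-fibre.

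Concretely, here is the sequence of steps I would carry out. First, fix $g \in \G$ with $s(g) = x$, $t(g) = y$. Since right translation $R_g \colon s^{-1}(y) \arr s^{-1}(x)$ is a diffeomorphism, the second bullet of Lemma \ref{multiplicative_form_is_equivariant} gives $(R_g)^*(\om_{\mid s^{-1}(x)}) = \om_{\mid s^{-1}(y)}$; in particular $d_{1_y} R_g$ carries $\ker(\om_{1_y}) \cap \ker(d_{1_y}s)$ isomorphically onto $\ker(\om_g) \cap \ker(d_g s)$, and the two have the same dimension. Since $\om$ has constant rank, $\ker(\om_g)$ has the same dimension for all $g$; combining these two facts, to show $\ker(\om_g) + \ker(d_g s) = T_g \G$ it suffices, by a dimension count, to show the sum is direct at units, i.e.\ that $\ker(\om_{1_y}) \cap \ker(d_{1_y} s) = \ker(\om_{1_y}) \cap A_y$ is complemented correctly — more precisely, it suffices to check the transversality identity $\ker(\om_{1_x}) + \ker(d_{1_x} s) = T_{1_x}\G$ at every unit. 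So the problem localizes to units.

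At a unit $1_x$, one uses the canonical splitting $T_{1_x}\G = A_x \oplus T_x M$ (via $d_{1_x}s$ and the unit section), where $A_x = \ker(d_{1_x}s)$. The transversality $\ker(\om_{1_x}) + A_x = T_{1_x}\G$ is then equivalent to the surjectivity of the composite $\ker(\om_{1_x}) \hookrightarrow T_{1_x}\G \xrightarrow{d_{1_x}s} T_x M$. To see this surjectivity, take $v \in T_x M$, lift it to any $\tilde v \in T_{1_x}\G$ with $d_{1_x}s(\tilde v) = v$ (e.g.\ via the unit section, $\tilde v = d_x u(v)$, which also satisfies $d_{1_x}t(\tilde v) = v$); then $\om_{1_x}(\tilde v) \in E_x$. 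Now exploit multiplicativity: differentiating $m^*\om = \pr_1^*\om + g\cdot \pr_2^*\om$ along the diagonal-type direction at $(1_x,1_x)$, or more cleanly using that $\om$ restricted to $s$-fibres pulls back correctly under left translations (first bullet of Lemma \ref{multiplicative_form_is_equivariant}), one shows that the value $\om_{1_x}(\tilde v)$ can be absorbed: there is $w \in \ker(d_{1_x}s) = A_x$ with $\om_{1_x}(w) = \om_{1_x}(\tilde v)$. Here one needs that $\om_{1_x}$ restricted to $A_x$ has the same image as $\om_{1_x}$ on all of $T_{1_x}\G$; this follows because the unit section direction $d_x u(T_x M)$ contributes to the image of $\om$ only what is already reachable from $A_x$, a consequence of the multiplicativity relation $\om_{1_x} \circ d_x u = 0$ applied to decompose arbitrary tangent vectors. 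Then $\tilde v - w \in \ker(\om_{1_x})$ and $d_{1_x}s(\tilde v - w) = v$, giving surjectivity. Finally, $t$-transversality follows by the identical argument with the roles of $s$ and $t$ interchanged, using the first bullet of Lemma \ref{multiplicative_form_is_equivariant} in place of the second (left translations instead of right) — or, even more economically, by applying the $s$-transversality result to the opposite groupoid, noting that $\om$ on $\G$ corresponds to a multiplicative form on $\G^{\mathrm{op}}$ after twisting by the representation, with $s$ and $t$ swapped.

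I expect the main obstacle to be the last point of the unit-level argument: showing that $\om_{1_x}(d_x u(T_x M)) \subseteq \om_{1_x}(A_x)$, i.e.\ that the unit section does not enlarge the image of $\om$ at a unit beyond what the algebroid already sees. This is precisely where the multiplicativity of $\om$ (not merely its $H$-equivariance) is essential, and it must be extracted by differentiating the defining identity $m^*\om = \pr_1^*\om + g \cdot \pr_2^*\om$ at $(1_x, 1_x)$ in the right pair of directions; getting the bookkeeping of the two projections and the twist by $g = 1_x$ exactly right is the delicate part. Once that identity is in hand, the dimension counts and the reduction from general $g$ to units are routine.
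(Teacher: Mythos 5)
Your proposal follows essentially the same route as the paper's proof: transversality at units via the multiplicativity consequence $u^*\om = 0$ for the unit section $u$ (your ``absorption'' step collapses to taking $w = 0$ once this identity is in hand), and then propagation to arbitrary arrows by the translation invariance of Lemma \ref{multiplicative_form_is_equivariant} combined with the constant-rank dimension count. The only cosmetic difference is the $t$-transversality step, where you mirror the argument using left translations (or the opposite groupoid), whereas the paper composes with the inversion via $t = s \circ i$; both variants are valid.
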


\begin{proof}
We prove first the $s$-transversality at the units of $\G$. For any $v \in T_{1_x} \mathcal{G}$ we consider $v' = d_x u (d_{1_x} s (v) )$, where $u: M \arr \G$ is the unit map $x \mapsto 1_x$. Using the multiplicativity of $\om$, it is immediate to check that $u^* \om = 0$, hence $v' \in \ker(\om_{1_x})$. We can therefore write
$$ v = v' + (v - v'),$$
and this proves the $s$-transversality at the units, since $v - v' \in \ker (d_{1_x} s)$:
$$d_{1_x} s (v - v') = d_{1_x} s (v) - d_{1_x} (s \circ u \circ s ) (v) = d_{1_x} s (v) - d_{1_x} (id_M \circ s) (v) = 0.$$

For the general statement, let us denote by $\cap_g$ the intersection
$$ \cap_g := \ker(\om_g) \cap \ker (d_g s), \quad g \in \mathcal{G}.$$
From what was said above, for $g=1_x$, the dimension of $\cap_{1_x}$ is constant:
$$\dim (\cap_{1_x}) = \dim(\ker(\om_{1_x})) + \dim (\ker (d_{1_x} s) ) - \dim (T_{1_x} \mathcal{G}).$$ 
On the other hand, using Lemma \ref{multiplicative_form_is_equivariant}, one sees that
$$ d_g {R_h}_{| \cap_g}: \cap_g \arr \cap_h \quad \forall (g,h) \in \mathcal{G} \tensor[_s]{\times}{_t} \mathcal{G}$$
is an isomorphism. Then the dimension of $\cap_g$ is constant:
$$ \dim (\cap_{g_1}) = \dim (\cap_{1_{s(g_1)}}) = \dim (\cap_{1_{s(g_2)}}) = \dim (\cap_{g_2}) \quad \forall g_1, g_2 \in \mathcal{G}.$$
Since $\om$ has constant rank, it follows that
$$\dim (\ker(\om_g)) + \dim (\ker (d_g s)) - \dim (\cap_g) = \const \quad \forall g \in \mathcal{G}.$$
In particular, $\const$ is the value taken at $g = 1_x$, which is equal to $\dim (\mathcal{G})$; then the $s$-transversality condition holds at any point.

Last, we show that $s$- and $t$-transversality are equivalent. Indeed, assuming that $\om$ is $s$-transversal, for every $g \in \mathcal{G}$, $d_g s$ is surjective when restricted to $\ker(\om_g)$. However, since $t = s \circ i$,
$$d_{g^{-1}} t_{| \ker(\om_{g^{-1}}) } = d_g s_{| \ker(\om_g)} \circ d_{g^{-1}} i_{| \ker(\om_{g^{-1}})},$$
where we used the multiplicativity of $\om$ to see that $d_g i$ sends $\ker(\om_{g^{-1}})$ into $\ker(\om_g)$. Being a composition of surjective maps, $d_{g^{-1}} t_{| \ker(\om_{g^{-1}})}$ is surjective as well for every $g \in \mathcal{G}$, hence $\om$ is $t$-transversal.
\end{proof}

\subsection{Multiplicative groupoid actions}

Let $\mathcal{G} \rightrightarrows M$ be a Lie groupoid acting (on the left) on the manifold $P$ along the map $\mu: P \arr M$; denote by $m_P$ the action map, defined on the fibred product
$$\mathcal{G} \tensor[_s]{\times}{_\mu} P:= \{ (g,p) \in \mathcal{G} \times P \mid s(g) = \mu(p) \}.$$
Moreover, let $E$ be a representation of $\mathcal{G}$, $\om \in \Om^k (\mathcal{G}, t^* E)$ a multiplicative form and $\te \in \Om^k (P, \mu^* E)$ a differential form; we represent this setting in the following diagram:

\begin{center}
  \tikzstyle{line} = [draw, -latex']
\pgfmathsetmacro{\shift}{0.4ex}
\begin{tikzpicture}[node distance = 1cm, auto]
    \node (G1) {$ (\mathcal{G}, \om) $};
    \node [right of=G1, scale=2] (1) {$\curvearrowright$};
    \node [right of=1] (P) {$(P, \te)$};
    \node [below of=G1] (M1) {$M$};
    
    \path [line] (P) -- (M1) node [midway,below] {$\mu$};

\draw[->,transform canvas={xshift=-\shift}](G1) to node[midway,left] {$s$}(M1);
\draw[->,transform canvas={xshift=\shift}] (G1) to node[midway,right]  {$t$} (M1);

\end{tikzpicture}
\end{center}

\begin{defn}\label{def_multiplicative_action}
A $\mathcal{G}$-action on $P$ as above is called {\bf multiplicative} (with respect to $\om$ and $\te$) if
$$ (m_P^* \te)_{(g, p)} = (\pr_1^* \om)_{(g,p)} + g \cdot (\pr_2^* \te)_{(g,p)} \quad \forall (g, p) \in \mathcal{G} \tensor[_s]{\times}{_\mu} P.$$
As for multiplicative forms, we will often denote this as
$$ m_P^* \te = \pr_1^* \om + g \cdot \pr_2^* \te.$$
Multiplicative right actions are defined analogously, with the condition
\begin{equation*}
 g \cdot (m_P^* \te)_{(p, g)} = (\pr_1^* \te)_{(p, g)} + (\pr_2^* \om)_{(p, g)} \quad \forall (p, g) \in P \tensor[_\mu]{\times}{_t}\mathcal{G}. \qedhere 
\end{equation*}
\end{defn}


In the particular case when $\G$ is a Lie group, Definition \ref{def_multiplicative_action} simplifies considerably.

\begin{thm}\label{multiplicative_action_principal_bundles}
Let $G$ be a Lie group acting on a manifold $P$, $V$ a representation of $G$, and consider a multiplicative form $\omega \in \Omega^1 (G,V)$ and a $G$-equivariant form $\theta \in \Omega^1 (P,V)$. Then the following three statements are equivalent.
\begin{enumerate}
 \item $(m_P^*\theta)_{(g,p)} (\alpha, v) = (\pr_1^* \omega)_{(g,p)}(\alpha, v) + g \cdot (\pr_2^*\theta)_{(g,p)} (\alpha, v)$ for every $\alpha \in T_g G$, $v \in T_p P$.
 \item $(m_P^*\theta)_{(e,p)} (\alpha, v) = (\pr_1^* \omega)_{(e,p)}(\alpha, v) + (\pr_2^*\theta)_{(e,p)} (\alpha, v)$ for every $\alpha \in \g$, $v \in T_p P$.
 \item $\theta_p (a_p (\alpha)) = \omega_e (\alpha)$ for every $\alpha \in \g$, $p \in P$, with $a_p: \g \to T_p P$ the infinitesimal $G$-action.
\end{enumerate}
\end{thm}

\begin{proof}
It is immediate to check that $1 \then 2 \then 3$ (by plugging in, respectively, $g=e$ and $v=0$). Conversely, to prove $3 \then 2$, it is enough to compute , for every $\alpha \in \g$ and $v \in T_p P$,
$$ (m_P^*\theta)_{(e,p)} (\alpha, v) - (\pr_2^*\theta)_{(e,p)} (\alpha, v) = \theta_p (d_{(e,p)} m_P (\alpha, v) - d_p L_e (v) ) = \theta_p (d_{(e,p)} m_P (\alpha, v) - d_{(e,p)} m_P (0, v) ) = $$
$$ = \theta_p (d_{(e,p)} m_P (\alpha, 0) ) = \theta_p (a_p (\alpha) ) = \omega_e (\alpha) = (\pr_1^* \omega)_{(e,p)}(\alpha).$$
Last, in order to show that $2 \then 1$, consider $\alpha \in T_g G$ and its left-translated $\beta:= d_g L_{g^{-1}} (\alpha) \in \g$, so that $\alpha = d_e L_g (\beta)$. It follows that
$$ \theta_{g \cdot p} (d_{(g,p)} m_P (\alpha, 0) ) = \theta_{g \cdot p} (d_{(g,p)} m_P ( d_e L_g (\beta), 0)  ) = \theta_{g \cdot p} (d_p L_g  (d_{(e,p)} m_P ( \beta, 0 ) )  ) = $$
$$ = (L_g^* \theta)_p (d_{(e,p)} m_P ( \beta, 0 ) ) = g\cdot \theta_p (d_{(e,p)} m_P ( \beta, 0 ) ) = g \cdot \theta_p (a_p (\beta) ) = $$
$$ = g \cdot \omega_e (\beta) = \omega_g ( d_e L_g (\beta) ) = (\pr_1^* \omega)_{(g,p)}(\alpha, v).$$
where in the second line we used the $G$-equivariance of $\theta$ and in the third property 2. We conclude, using again the $G$-equivariance of $\theta$, that for every $v \in T_p P$
$$ (m_P^*\theta)_{(g,p)} (\alpha, v) - (\pr_1^* \omega)_{(g,p)}(\alpha, v) = \theta_{g \cdot p} (d m_P (\alpha, v) ) - \theta_{g \cdot p} (d_{(g,p)} m_P (\alpha, 0) ) = $$
\begin{equation*}
= \theta_{g \cdot p} (d m_P (0, v) ) = \theta_{g \cdot p} (d_p L_g (v) ) = g \cdot \theta_p (v) = g \cdot (\pr_1^*\theta)_{(g,p)} (\alpha, v). \qedhere 
\end{equation*}
\end{proof}

\begin{cor}\label{condition_multiplicativity_principal_bundle}
 In the setting of Proposition \ref{multiplicative_action_principal_bundles},
\begin{itemize}
 \item if $\omega$ is the zero form, the action is multiplicative if and only if $\theta (\alpha^\dagger) = 0$ for every $\alpha \in \g$;
 \item if $V = \g$ and $\omega$ is the Maurer-Cartan form, the action is multiplicative if and only if $\theta(\alpha^\dagger) = \alpha$ for every $\alpha \in \g$, i.e.\ $\theta$ is a principal connection on $P$.
\end{itemize}
\end{cor}

The interested reader could find a similar infinitesimal condition for multiplicative actions of Lie groupoids (not just of Lie groups) in \cite[Proposition 5.3.4]{Cat20}.


\subsection{Principal bundles and multiplicative actions}

We are going to show how to use principal bundles to ``transport'' multiplicative forms via multiplicative actions. 

%
%
%

\begin{thm}\label{transportation_multiplicative_via_Morita}
Let $H$ be a Lie group and $\om_H \in \Om^1 (H, V)$ a multiplicative form with coefficients in a representation $V$ of $H$. Let also $P$ be a (left) principal $H$-bundle over $M$, and $\te \in \Om^1 (P, V)$ a differential form such that the $H$-action is multiplicative. 

Then $\Gaug(P)$ carries a unique multiplicative form $\om \in \Om^1 ( \Gaug(P), t^* P[V])$
such that
$$ \tau^* \om = \tilde{s}^* \te - \tilde{t}^* \te,$$
where $\tau$ is the projection $P \times P \arr \Gaug(P)$ and $\tilde{s}, \tilde{t}$ are the source and the target of the pair groupoid $P \times P \rightrightarrows P$. Moreover,
\begin{itemize}
 \item the action of $(\Gaug(P), \om)$ on $(P,\te)$ is multiplicative;
 \item the $\pi$-pullback of the vector bundle $\g(\om) := \ker(\om)\cap \ker(ds)_{\mid M}$ is isomorphic to $\ker(\theta)$, i.e.\
 $$ (\pi^*\mathfrak{g}(\om))_{p} \cong \ker(\te_p) \quad \forall p \in P.$$
\end{itemize}

\end{thm}

\begin{proof}
Let us represent on the following diagram the spaces and the maps we are going to use.

\begin{center}

  \tikzstyle{line} = [draw, -latex']
\begin{tikzpicture}[node distance = 2cm, auto]

\pgfmathsetmacro{\shift}{0.4ex}

    \node (0) {$ \quad $};
    \node [below of=0] (G) {$ (H, \om_H) $};
    \node [right of=G] (P) {$ (P, \te) $};
    \node [right of=P] (1) {$ \quad $};
    \node [above of=1] (PxP) {$ (P \times P, \tilde{\te}) $};
    \node [right of=1] (PxP/G) {$ (\Gaug(P), \om) $};
    \node [below of=PxP/G] (P/G) {$ M $};

 \node [right of=PxP/G] (repr) {$ P[V] $};

    \path[->] (G) edge [bend left=30] node[midway,below] {$ m_P $} (P);
    \path[->] (G) edge [bend left=30] node[midway,above] {$ \tilde{m}_P $} (PxP);

    \path[->] (PxP/G) edge [bend right=20] node[midway,below] {$ \hat{m}_P $} (P);

    \path [line] (PxP) -- (PxP/G) node [midway,right] {$ \tau $};

    
\draw[->,transform canvas={xshift=-\shift}](PxP) to node { $\tilde{t} $}(P);
\draw[->,transform canvas={xshift=\shift}] (PxP) to node[swap] {$ \tilde{s} $} (P);

\draw[->,transform canvas={xshift=-\shift}](PxP/G) to node[midway,left] {$ s $}(P/G);
\draw[->,transform canvas={xshift=\shift}] (PxP/G) to node[midway,right]  {$ t $} (P/G);

    \path [line] (P) -- (P/G) node [midway,above] {$ \pi $};
    \path [line] (repr) -- (P/G) node [midway,above] {$ $};

\end{tikzpicture}

\end{center}

The proof is carried out in five steps:
\begin{enumerate}
\item The form $\tilde{\te} := \tilde{s}^* \te - \tilde{t}^* \te \in \Om^1 (P \times P, V)$ is basic.
\item There is a unique form $\om \in \Om^1 ( \Gaug(P), t^* P[V])$ such that $\tau^* \om = \tilde{\te}$.
\item The form $\om$ is multiplicative.
\item The action of $\Gaug(P)$ on $P$ is multiplicative w.r.t.\ $\om$ and $\te$.
\item $ (\pi^*\mathfrak{g}(\om))_{p} \cong \ker(\te_p)$ for every $p \in P$.
\end{enumerate}

\underline{First part}: we denote by $\pr$ the projections from $H \times P$ on the first and second component, and by $\tilde{\pr}$ the projections from $H \times (P \times P)$ to either one of the three components or two of them. Using the multiplicativity of $m_P$ we find
$$ (\tilde{m}_P)^* \tilde{\te} = (\tilde{m}_P)^* (\tilde{s}^* \te ) - (\tilde{m}_P)^* (\tilde{t}^* \te ) = (\tilde{s} \circ (\tilde{m}_P))^* \te - (\tilde{t} \circ (\tilde{m}_P))^* \te = $$
$$ = (m_P \circ \tilde{\pr}_{13} )^* \te - (m_P \circ \tilde{\pr}_{12} )^* \te = \tilde{\pr}_{13}^* (m_P^* \te) - \tilde{\pr}_{12}^* (m_P^* \te) = $$
$$ = \tilde{\pr}_{13}^* (\pr_1^* \om ) + \tilde{\pr}_{13}^* (g \cdot \pr_2^* \te ) - \tilde{\pr}_{12}^* (\pr_1^* \om ) - \tilde{\pr}_{12}^* (g \cdot \pr_2^* \te ) = $$
$$ = \cancel{\tilde{\pr}_1^* \om} + g \cdot \tilde{\pr}_3^* \te - \cancel{\tilde{\pr}_1^* \om} - g \cdot \tilde{\pr}_2^* \te = $$
$$ = g \cdot ( (\tilde{s} \circ \tilde{\pr}_{23})^* \te - (\tilde{t} \circ \tilde{\pr}_{23})^* \te ) = g \cdot \tilde{\pr}_2^* (\tilde{s}^* \te - \tilde{t}^* \te) = g \cdot \tilde{\pr}_2^* \tilde{\te}. $$
We conclude that $\tilde{\te}$ is $H$-equivariant and horizontal, i.e.\ basic:
$$ (L_g^*\theta)_p (v) = \theta_{g \cdot p} (d_p \tilde{m}_P (g, \cdot) (v) ) = \theta_{g \cdot p} (d_{(g, p)} \tilde{m}_P (0, v) ) = (\tilde{m}_P^*\theta)_{(g,p)} (0,v) = g \cdot \theta_p (v), $$
$$ \theta_p (a_p(\alpha)) = \theta_p (d_e \tilde{m}_P (\cdot, p) (\alpha) ) = \theta_p (d_{(e,p)} \tilde{m}_P (\alpha,0) ) = (\tilde{m}_P^*\theta)_{(e,p)} (\alpha,0) = e \cdot \theta_p(0) = 0.$$

\

\underline{Second part}: since $\tau: P \times P \to \Gaug(P)$ is a principal bundle, we know from the general theory of basic forms that
$$ \Om^k (\Gaug(P), t^*P[V]) \arr \Om^k_{bas} (P \times P, V), \quad \alpha \mapsto \tau^* \alpha$$
is an isomorphism. Since $\tilde{\te} \in \Om^1_{bas} (P \times P, V )$, then there exists a unique form $\om \in \Om^1 ( \Gaug(P), t^*P[V] )$ such that $\tau^* \om = \tilde{\te}$.

\

\underline{Third part}: it is immediate to check that $\tilde{\te} = \tilde{s}^* \te - \tilde{t}^* \te$ is multiplicative. Moreover, denote by $\bar{m}$, $\bar{\pr_1}$ and $\bar{\pr_2}$ the maps
$$(P \times P) \tensor[_{\tilde{t}}]{\times}{_{\tilde{s}}} (P \times P) \arr P \times P$$
corresponding to the multiplication of the groupoid $P \times P$ and to the projections of $(P \times P) \tensor[_{\tilde{t}}]{\times}{_{\tilde{s}}} (P \times P)$ on the first and second component, and by $[ \bar{m} ]$ and $[ \bar{\pr}_i ]$ the projections of those maps to the quotient $(P \times P)/H$. 
With the usual arguments we get
$$ (\tau \times \tau)^* ( [ \bar{m} ]^* \om ) = ( [ \bar{m}] \circ (\tau \times \tau) )^* \om = (\tau \circ \bar{m})^* \om = \bar{m}^* (\tau^* \om) = \bar{m}^* \tilde{\te} = $$
$$ = \bar{\pr}_1^* \tilde{\te} + \bar{\pr}_2^* \tilde{\te} = \bar{\pr}_1^* (\tau^* \om) + \bar{\pr}_2^* (\tau^* \om) = (\tau \circ \bar{\pr}_1 )^* \om + (\tau \circ \bar{\pr}_2)^* \om = $$
$$ = ([\bar{\pr}_1] \circ (\tau \times \tau) )^* \om + ([\bar{\pr}_2] \circ (\tau \times \tau) )^* \om = (\tau \times \tau)^* ( [\bar{\pr}_1] ^* \om + [\bar{\pr}_2]^* \om ). $$
By the injectivity of the pullback we get the multiplicativity of $\om$.

\

\underline{Fourth part}: we see first that the action of the pair groupoid $P \times P$ on $P$,
$$ \hat{m}_P: P 
\tensor[_{id}]{\times}{_{\tilde{t}}} (P \times P) \arr P, \quad (p, (p,q) ) \mapsto q,$$
is multiplicative with respect to $\te$ and $\tilde{\te}$:
$$ (\hat{\pr}_1)^*\te + (\hat{\pr}_2)^* \tilde{\te} = \cancel{(\hat{\pr}_1)^*\te} + (\tilde{s} \circ \hat{\pr}_2)^* \te - \cancel{(\tilde{t} \circ \hat{\pr}_2)^* \te} = (\hat{m}_P)^*\te.$$

Then, when looking at the action $[\hat{m}_P]$ of the quotient $(P \times P)/H$ on $P$, the multiplicativity condition is preserved: 
$$ (id_P, \tau)^* ([\hat{m}_P]^* \te) = ( [\hat{m}_P] \circ (id_P,\tau) )^* \te = \hat{m}_P^* \te = \hat{\pr}_1^* \te - \hat{\pr}_2^*\tilde{\te} = $$
$$ = ( [\hat{\pr}_1] \circ (id_P, \tau) )^* \te + (\tau \circ \hat{\pr}_2)^* \om = ( [\hat{\pr}_1] \circ (id_P, \tau) )^* \te + ( [\hat{\pr}_2] \circ (id_P, \tau) )^* \om = $$
$$ = (id_P, \tau)^* ( [\hat{\pr}_1]^* \te + [\hat{\pr}_2]^* \om ).$$
Again, by the injectivity of the pullback we get the multiplicativity of the $\Gaug(P)$-action on $P$.

\

\underline{Fifth part}: recall that the right $\Gaug(P)$-action on $P$ is principal with respect to the trivial projection $P \to \{*\}$. Accordingly, denoting by $A$ the Lie algebroid of $\Gaug(P)$, we can consider the infinitesimal $\Gaug(P)$-action on $P$:
$$a: \pi^*A \arr TP, \quad a (\al)_p : = d_{1_{\pi(p)}} \hat{m}_P (\cdot, p) (\al_{\pi(p)} ).$$
Recall also that the image of $a$ coincides with the vertical bundle of $P \to \{*\}$, which in this case is simply $TP$. Together with the fact that infinitesimal free actions are injective, we see that
$$a_p: A_{\pi(p)} \arr T_p P$$
is an isomorphism for every $p \in P$. 

Therefore, we have only to show that $a_p$ sends $\g_{\pi(p)}(\om) = A_{\pi(p)} \cap \ker (\om_{1_{\pi(p)}})$ to $\ker(\te_p)$. Consider $\al \in \g_{\pi(p)}(\om)$; since the action is multiplicative
$$ \te_p (a_p (\al_x) ) = \te_p ( d_{(1_x, p)} m_P (\al_x, 0) ) = (m_P^* \te)_{(1_x, p)} (\al_x, 0) = \om_{1_x} (\al_x) + \cancel{\te_p (0)} = 0,$$
therefore $a_p (\al) \in \ker(\te_p)$. Conversely, if $a_p(\al) \in \ker(\te_p)$, for some $\al \in A_{\pi(p)}$, then $\al \in \ker (\om_{1_{\pi(p)}})$, hence $\al \in \mathfrak{g}_{\pi(p)}(\om)$.
\end{proof}

\begin{obs}\label{obs_paper_Drummond}
Proposition 3.4 from \cite{Bur13} establishes a 1-1 correspondence between connection forms $\theta$ on a principal bundle $P \to M$ and multiplicative forms $\omega$ on its gauge groupoid $\Gaug(P) \rightrightarrows M$.

Using our Proposition \ref{transportation_multiplicative_via_Morita} we can provide an alternative method to prove one implication of Bursztyn and Drummond's result. Indeed, if we consider 
as $\om_H$ the Maurer-Cartan form of $H$, by Corollary \ref{condition_multiplicativity_principal_bundle} we have that $\theta$ is a principal connection on $P$, and out of it we built a multiplicative form $\omega$ on $\Gaug(P)$. 
\end{obs}

We conclude this appendix by remarking that Proposition \ref{transportation_multiplicative_via_Morita} can be more naturally expressed in greater generality, replacing $H$ by a Lie groupoid $\mathcal{H} \rightrightarrows X$, adding a moment map $\mu: P \to X$, and replacing the isomorphism $\pi^*\g(\om) \cong \ker(\te)$ by $\pi^*\g(\om) \cong \ker(\te) \cap \ker(d\mu)$. Such setting is particularly suitable to treat the relations between multiplicative forms and Morita equivalence, which could lead to possible generalisations to the result mentioned in Remark \ref{obs_paper_Drummond}.

However, since in this paper we focus on applications to principal group bundles, we only draw below a diagram for the reader interested in a more conceptual understanding:

\begin{center}

  \tikzstyle{line} = [draw, -latex']
\begin{tikzpicture}[node distance = 2cm, auto]

\pgfmathsetmacro{\shift}{0.4ex}

    \node (0) {$ \quad $};
    \node [below of=0] (G) {$ (\mathcal{H}, \om_{\mathcal{H}}) $};
    \node [right of=G] (P) {$ (P, \te) $};
    \node [right of=P] (1) {$ \quad $};
    \node [above of=1] (PxP) {$ (P \times_\mu P, \tilde{\te} = \tilde{s}^* \te - \tilde{t}^* \te) $};
    \node [right of=1] (PxP/G) {$ (\Gaug(P), \om) $};
    \node [below of=G] (M) {$ X $};
    \node [below of=PxP/G] (P/G) {$ M $};

 \node [left of=G] (rrepr) {$ E $};
\node [right of=PxP/G] (repr) {$ P[E] $};

    \path [line] (P) -- (M) node [midway,right] {$ \mu $};
    \path[->] (PxP) edge [bend left=30] node[near end,below] {$ \tilde{\mu} $} (M);
    \path[->] (G) edge [bend left=30] node[midway,below] {$ m_P $} (P);
    \path[->] (G) edge [bend left=30] node[midway,above] {$ \tilde{m}_P $} (PxP);

    \path[->] (PxP/G) edge [bend right=20] node[midway,below] {$ \hat{m}_P $} (P);

    \path [line] (PxP) -- (PxP/G) node [midway,right] {$ \tau $};


\draw[->,transform canvas={xshift=-\shift}](G) to node[midway,left] {$  $}(M);
\draw[->,transform canvas={xshift=\shift}] (G) to node[midway,right]  {$  $} (M);
    

\draw[->,transform canvas={xshift=-\shift}](PxP) to node { $\tilde{t} $}(P);
\draw[->,transform canvas={xshift=\shift}] (PxP) to node[swap] {$ \tilde{s} $} (P);

\draw[->,transform canvas={xshift=-\shift}](PxP/G) to node[midway,left] {$ s $}(P/G);
\draw[->,transform canvas={xshift=\shift}] (PxP/G) to node[midway,right]  {$ t $} (P/G);

    \path [line] (P) -- (P/G) node [midway,above] {$ \pi $};
   \path [line] (rrepr) -- (M) node [midway,above] {$ $};
   \path [line] (repr) -- (P/G) node [midway,above] {$ $};

\end{tikzpicture}

\end{center}

The proof would be nearly identical, provided one considers the fibred pair groupoid $P \times_\mu P \rightrightarrows P$, the associated gauge groupoid $\Gaug(P):= (P \times_\mu P)/\mathcal{H}$, and a suitable generalisation of basic forms for principal groupoid bundles (see e.g.\ Appendix 8.8 of \cite{Yud16}).

\end{document}